  \def\R{\mathbb{ R}}
  \def\C{\mathbb{ C}}
  \def\Z{\mathbb{ Z}}
  \def\Sy{\mathbb{ S}}
  \def\bP{\mathbf{ P}} 
  \def\bF{\mathbf{ F}}   
  \def\bD{\mathbf{ D}}  
  \def\bA{\mathbf{ A}}  
  \def\bB{\mathbf{ B}}    
  \def\bC{\mathbf{ C}}    
  \def\bX{\mathbf{ X}}
  \def\bU{\mathbf{ U}}  
  \def\bV{\mathbf{ V}}
  \def\mP{\mathcal{ P}}
  \def\rk{\mbox{\rm rank}}
  \def\tr{\mbox{\rm Tr}}
  \def\diag{\mbox{\rm diag}}
  \def\re{\mbox{\rm Re}}
  \def\im{\mbox{\rm Im}}
    \def\jac{\mbox{\rm \texttt{Jac}}}
  \newtheorem{theo}{\bf Theorem}
  \newtheorem{coro}[theo]{\bf Corollary}
  \newtheorem{prop}[theo]{\bf Proposition}
  \newtheorem{exam}{\it Example}
  \newtheorem{rema}{\it Remark}
\begin{document}
 \title{\bf 
 Sum-of-square-of-rational-function
based representations of positive semidefinite  polynomial matrices
}

\author[a]{L\^{e} Thanh Hi{\^e}\hskip-1.345mm\'{}\hskip-1mmu\thanks{
\\
\mbox{}
\ \ \quad Email: \texttt{lethanhhieu@qnu.edu.vn}}}
\affil[a]{\small Department of Mathematics, Quy Nhon University, Vietnam}

\author[b]{Ph{\d a}m Nh{\d {\^a}}t Thi{\d {\^e}}n\thanks{
Email: \texttt{nhat-thien.pham@u-psud.fr} }}
\affil[b]{Department of Mathematics, Paris-Sud University, France}

 \maketitle
 
 \begin{abstract}
 The paper 
 proves
 sum-of-square-of-rational-function based representations (shortly, sosrf-based representations)
 of polynomial matrices that are positive semidefinite on some special sets:
 $\R^n;$
$\R$ and its intervals $[a,b],$ $[0,\infty);$
and the strips $[a,b] \times \R \subset \R^2.$
A method for numerically computing such representations is also presented.
The methodology is divided into two stages: 
 (S1) diagonalizing the initial polynomial matrix based on the Schm\"{u}dgen's procedure \cite{Schmudgen09};
 (S2) 
	for  
 each diagonal element of the resulting matrix, 
 find its low rank sosrf-representation
 satisfying the Artin's theorem
solving the Hilbert's 17th problem.  
Some numerical tests and illustrations with \textsf{OCTAVE} are also presented for each type of polynomial matrices.

 \end{abstract}
 
{\bf Keywords:}
nonnegative polynomial, 
positive semidefinite polynomial matrix,
sum of Hermitian squares of polynomial matrices,
Schm\"{u}dgen's diagonalization




\section{Introduction and preliminary}\label{rmp_intr}


Nonnegative polynomials,
i.e., polynomials with scalar-coefficients,
appears in a variety of mathematical problems and applications.
The problem of approximating such polynomials as sums of squares of polynomials have been studied more deeply
in both theoretical and practical points of view, 
see eg., \cite{Menini2018} and references therein.
In our opinion, this idea can be generalized to matrix polynomials,
i.e., 
polynomials with scalar-matrix coefficients.
A matrix polynomial can also be written as a polynomial matrix,
i.e., a matrix with polynomial entries.
However,
the terminology ``nonnegative'' for scalar polynomials must be generalized as ``positive semidefinite'',
which is usually used for (scalar or polynomial) matrices.
Because of this reason,
we prefer ``polynomial matrices'' to ``matrix polynomial''
 in this paper. 
 
There have been a variety of researches focusing on the generalization of
Positivstellens\"{a}tze 
to
psd-polynomial matrices, 
i.e., $m\geq 2,$
and its applications 
(see in, eg., \cite{Trinh15} and references therein).
However, from the practical point of view,
most applications lead to problems over univariate/bivariate polynomial matrices. 
This motivates us to focus on these two types of polynomial matrices.

Let $\R[x]$ be the ring of $n$-real variable polynomials with real coefficients.
Let 
$\R^{m\times m}$
and $\Sy^m \R$
denote
 the set of all $m\times m$ matrices
with real entries
 and its subset of symmetric matrices, respectively.
By $.^\mathrm{T}$ we denote the transpose of matrices.
For $A \in \Sy^m\R,$
by
$A\succeq 0$ we mean $A$ is positive semidefinite, i.e., $u^\mathrm{T}Au \geq 0$ for all $u\in \R^n.$ 
We denote 
 $\Sy_+^m\R$ 
the set of all positive semidefinite matrices in $\Sy^m\R.$ 
Moreover,
for two matrices $A$ and $B $ in $\Sy^m\R,$ 
we write $A\succeq B$ if $A-B\in \Sy_+^m\R.$
%
For a subset $\mathcal{D} \subseteq \R^n,$
and a polynomial matrix $\bF\in \Sy^m\R[x],$
we say
$\bF$ to be positive semidefinite on $\mathcal{D}$ if
$$
\mathcal{D} = \{x\in \R^n|\ \bF(x) \succeq 0\}
				= \{x\in \R^n|\ u^\mathrm{T}\bF(x)u\geq 0, \forall u\in \R^n\}.
$$
%
We will say 
``\textit{psd}''
instead of ``positive semidefinite''.
Someone calls a psd polynomial matrix on $\mathcal{D} = \R^n$ 
the \textit{global psd} one.
Throughout this paper, we fix 
\begin{itemize}
\item 
	$m,n$ as above, i.e., $m$ is the size of a polynomial matrix and $n$ is the number of variables in the matrix;

\item
	$d:$ the maximum of degrees of $m^2$ polynomials in the matrix.
\end{itemize}
A symmetric polynomial matrix $\bF$ is called
a  ``\textit{sum of squares of polynomial matrices}'' 
(resp., a \textit{sum of squares of rational functions matrices}\footnote{
A \textit{rational function} is a ratio of polynomials.
})
if it is a finitely many sum of the form
$$
\bF = \sum_{i=1}^r \mathbf{A}_i^\mathrm{T} \mathbf{A}_i,
$$
where the matrices $\bA_i$ has polynomial entries
(resp., rational-function entries) 
defined on the whole space $\R^n.$
We shortly say a sum of squares matrix 
a 
\textit{sos-matrix}
while the other  a 
\textit{sosrf-matrix}.
It is clear that 
if $ b^2\bF,$ $b\in \R[x],$ is sos
then
$\bF$ is sosrf since
$$
\bF = \sum_{i=1}^r \left( \frac{1}{b}\mathbf{A}_i^\mathrm{T} \right) 	
						\left( \frac{1}{b}\mathbf{A}_i \right).
$$

It can be seen that any sos-matrix or sosrf-matrix is psd on $\R^n.$
The converse direction is always true for sosrf-matrices since Proposition \ref{prop:coroSchmud},
but not for sos-matrices.
The observation of this issue is easy to see for scalar polynomials 
including
a famous counter example of the Motzkin polynomial
$$
f^M(x,y) = 1 + x^2y^4 + x^4y^2 -3 x^2y^2.
$$
It cannot be expressed as a sum of squares of polynomials in $\R[x,y]$
but
$$
p^2 f^M(x,y) = (x^2-y^2)^2  + [xy(p-2)]^2 + [x^2y(p-2)]^2 + [xy^2(p-2)]^2
$$
with $p(x,y)= x^2+y^2.$

When $m=1,$  polynomial matrix concept here coincides with the scalar polynomial as usual,
and ``psd-polynomial matrices'' now means ``nonnegative polynomials'' as well.
There is  a number of  well-known Positivstellens\"{a}tze for nonnegative/positive scalar polynomials \cite{Krivine64, Stengle74},
leading to representations of 
scalar polynomials nonnegative on several subsets of $\R$
using sos-representations of other polynomials.
For example, 
it is well known in literature that,
see also in eg., \cite{Brickman62},
 any univariate real polynomial nonnegative on $\R$ can be expressed as a sum of two squares of real polynomials.
This is called a \textit{sos-representation} of the initial polynomial.
And,
a polynomial $f$ nonnegative on $[0, \infty)$ 
can be written as
$$
f(x) = p^2(x) + xq^2(x), 
$$
for some real polynomials $p,q.$
We will call
the later representation  the
``\textit{sos-based representation}'' of $f.$

In \cite{LeVaBa15},
the authors propose an algorithm for decomposing a nonnegative polynomial on $\R^n$
as a sum of squares of rational functions
relying on the idea of 
Reznick \cite{Reznick95}, saying that
the common denominator 
is a power of the sum of squares of coordinate functions.
In this paper, 
we deal with the problem of finding a 
sos-based and/or sosrf-based representation of a polynomial matrix which is positive semidefinite over one of the sets:
 $\R^n,$ 
 $[a,b],$
 $[0,+\infty)$
 and strips in $\R^2.$
The idea is to 
combine Schm\"{u}dgen's procedure of diagonalizing polynomial matrices and Levenberg-Marquardt algorithm \cite{r653} 
finding sosrf-representations of
the diagonal polynomials of the resulting diagonal matrices.
Our method for finding sosrf-based representations for psd-polynomial matrices is hence divided into two stages:

(S1) 
	diagonalizing the initial polynomial matrix $\bF$ suggested by Proposition \ref{prop:Schmud} below so that \eqref{eq:SchmudRel} holds true.
	Note that 
	 the initial polynomial matrix is positive semidefinite if and only if
  the resulting diagonal polynomials are all nonnegative;

(S2) 
	finding low-rank sos-representations of the resulting diagonal polynomials by
	applying the algorithm proposed in \cite{LeVaBa15}.
	The representation for the initial polynomial matrix we need is then obtain by substituting to the relation in the first stage.

As mentioned earlier,  
the second stage is 
to find sosrf-representations of scalar polynomials. 
It is well-known that any sum of squares polynomial can be determined by its Gram matrix.
With the help of Artin's theorem answering to the Hilbert's 17th problem, one notes 
\begin{align*}
	f \geq 0 \mbox{ on } \R^n
&\Longleftrightarrow
	\exists b\in \R[x]: b^2f = \sum_{i=1}^r f_i(x)^2, \quad f_i\in \R[x]\\
&\Longleftrightarrow
		\exists b\in \R[x], G \in \Sy^e\R: 
		b(x)^2f(x) = \pi(x)^\mathrm{T} G \pi(x),
\end{align*}
where $e = {n+ \deg(b^2f)\choose n},$
$\pi(x)$ is the vector of monomials $x^\alpha := x_1^{\alpha_1} \ldots x_n^{\alpha_n}$
with degree not exceed $\deg(b^2f),$
and 
$G$ 
is real symmetric and positive semidefinite,
which is called a \textit{Gram matrix} of $b^2f.$
By comparing the coefficients,
we see that each coefficient on the left side is linearly dependent on the entries of $G.$

In addition,
a Gram matrix of a sos-polynomial 
can be found to be low rank. 
This suggests us to
propose and solve
the matrix rank minimization problem 
\begin{equation}\label{eq:Frmp}
\mbox{minimize} \left\{ \rk(X) \ | \  X\in \R^{m\times p}, \phi(X) = u \right\},
%
\end{equation} 
\noindent
where 
$\phi: \R^{m\times p} \rightarrow \R^l$ is a differentiable  map,
that can be applied in our second stage.
The rank function is non-convex,
even the feasibility region is smooth.
The idea of this stage is to check whether the problem (\ref{eq:Frmp}) has a numerical solution of rank $r,$
step by step, for $r=1,2,\ldots$
At each step, with respect to each $r,$
the Levenberg-Marquardt method (for short, \textit{LM-method}) 
\cite{r653}
solving least square problems 
is applied to the function 
defined by $\phi(X)= u.$
The differentiability of $\phi$ guarantees the existence of the Jacobian matrix of $\phi$ in Levenberg-Marquardt iterations. 

The problem of finding a sos-representation of a scalar sos-polynomial is thus equivalent to which of solving a linear system finding a polynomial's Gram matrix with low rank.
This is reasonable to apply the model (\ref{eq:Frmp}) 
with respect to a suitable 
linear map $\phi.$
Furthermore,
any polynomial's Gram matrix must be symmetric,
and the existence of a Gram matrix is equivalent to which of its Cholesky factor\footnote{
A positive semidefinite matrix $X$ can always be factorized as
$X = YY^\mathrm{T},$ where $Y$ is called a Cholesky factor of $X.$
},
where both are the same rank.
In this sense, 
$\phi$ will be a quadratic function in the Cholesky factor.
Our second stage, as seen in Section \ref{sec:RankMod}, 
deals properly with finding such Cholesky factors.
The polynomial's Gram matrix is then derived, and so is the polynomial.
This is done by applying the LM-method.

The LM-method has been generalized for complex setting in \cite{r665, cot}, which is called the generalized LM-method, or gLM-method.
It solves the
least squares problems with real-valued function in complex variables.
The corresponding \textsc{Matlab} toolbox is called COT  \cite{cot}.
The authors there
convert the complex setting to the real one by
using 1-1 linear transformations that map any complex $n$-tuples 
$z  \in \C^n,$
to 
$(\re(z), \im(z)) \in \R^{2n},$
and/or 
to
$(z,\bar{z})$
with the help of Wirtinger calculus and the complex Taylor series expansions.
Even 
the gLM-method aims at solving least square problems with
real-valued functions in complex variables, 
it is also valid for real-valued functions in real variables.
In our work we recode the gLM-method 
in \textsf{OCTAVE} that makes more convenience for the readers.
Unlike as
 ``\texttt{lsqnonlin}'' function in \textsc{Matlab},
\textsf{OCTAVE} codes,
as those of COT as well, 
additionally need the Jacobian/gradient of the function as an input.
We hence give detailed formulas for Jacobian matrices with respect to particular cases 
that gLM-method  needs.
The numerical tests in this paper are all implemented in \textsf{GNU OCTAVE} 4.4.2\footnote{Available at \texttt{https://octave.sourceforge.io/}} 
and Ubuntu 14.04,
on a desktop 
Intel(R) Core(TM) i3-3220 CPU@3.30GHz and 4GB RAM.

This paper is organized as follows.
Section \ref{sec:SchmudDiag} 
gives a clear explaination to 
Schm\"{u}dgen diagonalization, 
which is considered as the first stage of our method,
and  
presents the corresponding algorithm. 
Some numerical tests are also illustrated.
The second stage is based on a rank matrix minimization model 
devoted in Section  \ref{sec:RankMod}.
After proposing an algorithm solving such a model, 
we apply to solve our second stage.
We give some numerical examples as well.
In section \ref{sec:sosrf-based}
we prove sosrf-based representations for polynomial matrices 
that are psd on the sets
$\R^n,$ ($n\geq 1$),
some intervals in $\R,$
and some 
strips in $\R^2.$
The conclusion of the paper is devoted in the last section.


\section{Diagonalization of polynomial matrices} \label{sec:SchmudDiag}

\subsection{Schm\"{u}dgen's procedure}
We start this section by recalling a very important matrix decomposition 
by Schm\"{u}dgen \cite{Schmudgen09} 
which is a key part for numerically computing 
sum of Hermitian squares of polynomial matrices below.
For a symmetric matrix $F $  
with entries in a commutative unitary ring $R,$
set
\begin{equation}\label{eq:SchmudProc}
X_\pm = 
\begin{bmatrix}
\alpha &0 \\ \pm \beta^\mathrm{T} & \alpha I
\end{bmatrix}, \quad
\widetilde{F} =
\begin{bmatrix}
\alpha^3 &0 \\ 0  & \alpha (\alpha C - \beta^\mathrm{T}\beta )
\end{bmatrix}
\in R^{m\times m},
\end{equation}
where $\alpha,$ $\beta$ and $C$  are parts of $F$ in the following partition
$$
F = 
\begin{bmatrix}
\alpha & \beta \\ \beta^\mathrm{T} & C
\end{bmatrix}.
$$
Then the following relations  hold
\begin{equation}\label{eq:SchmudRel}
X_+ X_- = X_- X_+ = \alpha^2 I, \quad
\alpha^4 F = X_+ \widetilde{F} X_+^\mathrm{T}, \quad
\widetilde{F}= X_- F X_-^\mathrm{T}.
\end{equation}

Schm\"{u}dgen's procedure \eqref{eq:SchmudProc} is applied to 
diagonalize a polynomial matrix as in the following proposition, 
viewed as an extension of Artin's theorem solving Hilbert's 17th problem.

\begin{prop}{\rm \cite[Corollary 10]{Schmudgen09}}\label{prop:Schmud}
Let $\bF\in \Sy^m \R[x] .$
Then there exist polynomial matrices 
$\bX_+, \bX_- \in \R[x]^{m\times m}$ 
and polynomials $b,d_j \in \R[x],$ $j=1, \ldots, m,$ 
such that
\begin{equation}\label{eq:SchmudDec}
\bX_+ \bX_- = \bX_- \bX_+ = bI_m, \quad
b^2 \bF = \bX_+ \diag(d_1, \ldots, d_m) \bX_+^\mathrm{T}
\end{equation}
and
$X_- \bF X_-^\mathrm{T} = \diag(d_1, \ldots, d_m).$
\end{prop}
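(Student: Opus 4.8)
The plan is to argue by induction on the matrix size $m$, using the one-step reduction \eqref{eq:SchmudProc}--\eqref{eq:SchmudRel} as the engine. For the base case $m=1$ the matrix $\bF=(f)$ is a single scalar polynomial, and one takes $b=1$, $d_1=f$, and $\bX_+=\bX_-=(1)$; all three relations in \eqref{eq:SchmudDec} then hold trivially. For the inductive step I would assume the statement for size $m-1$ and partition $\bF$ exactly as in \eqref{eq:SchmudProc}, with $\alpha\in\R[x]$ the $(1,1)$-entry, $\beta$ the first-row tail, and $C\in\Sy^{m-1}\R[x]$ the trailing block. Applying the single step produces $X_\pm$ and $\widetilde{\bF}=\diag(\alpha^3,\widetilde{C})$ with $\widetilde{C}:=\alpha(\alpha C-\beta^\mathrm{T}\beta)\in\Sy^{m-1}\R[x]$, together with the identities $X_+X_-=X_-X_+=\alpha^2 I_m$, $\alpha^4\bF=X_+\widetilde{\bF}X_+^\mathrm{T}$, and $\widetilde{\bF}=X_-\bF X_-^\mathrm{T}$ from \eqref{eq:SchmudRel}.

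The crux is then to diagonalize the smaller block $\widetilde{C}$ and splice the result back in. By the induction hypothesis applied to $\widetilde{C}$ there are polynomial matrices $\bU_+,\bU_-\in\R[x]^{(m-1)\times(m-1)}$ and polynomials $b',d_2,\dots,d_m$ with $\bU_+\bU_-=\bU_-\bU_+=b' I_{m-1}$, together with $(b')^2\widetilde{C}=\bU_+\diag(d_2,\dots,d_m)\bU_+^\mathrm{T}$ and $\bU_-\widetilde{C}\bU_-^\mathrm{T}=\diag(d_2,\dots,d_m)$. I would then set $d_1:=\alpha^3$, $b:=\alpha^2 b'$, and define the combined transforms by block embeddings of $\bU_\pm$ composed with $X_\pm$, namely
$$
\bX_- := \begin{bmatrix} 1 & 0 \\ 0 & \bU_- \end{bmatrix} X_-,
\qquad
\bX_+ := X_+ \begin{bmatrix} b' & 0 \\ 0 & \bU_+ \end{bmatrix}.
$$
Both are genuine polynomial matrices because $X_\pm$, $\bU_\pm$ and $b'$ all have polynomial entries; this is exactly where one must resist writing $\bX_+=b\,\bX_-^{-1}$, a formula that only lives over the fraction field.

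The three relations of \eqref{eq:SchmudDec} should then drop out by direct multiplication, with the scalar factors telescoping. For the first, $\bX_+\bX_-=X_+\diag(b',\bU_+\bU_-)X_-=X_+(b' I_m)X_-=b'\alpha^2 I_m=b I_m$, and symmetrically $\bX_-\bX_+=b I_m$ using $X_-X_+=\alpha^2 I_m$. For the diagonalization relation, the outer $X_-$ turns $\bF$ into $\widetilde{\bF}=\diag(\alpha^3,\widetilde{C})$, and the embedded $\bU_-$ turns $\widetilde{C}$ into $\diag(d_2,\dots,d_m)$, yielding $\bX_-\bF\bX_-^\mathrm{T}=\diag(d_1,\dots,d_m)$. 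For the representation relation, conjugating $\diag(d_1,\dots,d_m)$ by $\bX_+$ first rebuilds $(b')^2\widetilde{\bF}$ from the induction identity $(b')^2\widetilde{C}=\bU_+\diag(d_2,\dots,d_m)\bU_+^\mathrm{T}$, and then $X_+(b')^2\widetilde{\bF}X_+^\mathrm{T}=(b')^2\alpha^4\bF=b^2\bF$ via $\alpha^4\bF=X_+\widetilde{\bF}X_+^\mathrm{T}$.

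I expect the main obstacle to be bookkeeping rather than depth: one has to track the scalar factor $\alpha^2$ accumulated at each level so that the single polynomial $b$ emerges correctly, and one must establish polynomiality through the explicit products above rather than through matrix inversion, since $\bX_-$ is invertible only when $b\not\equiv 0$. It is worth recording that the construction remains valid as a chain of formal polynomial identities even in the degenerate case $\alpha\equiv 0$ (where $b\equiv 0$ and everything collapses trivially); for the intended application, in which psd-ness of $\bF$ is read off from nonnegativity of $d_1,\dots,d_m$, one additionally arranges $\alpha\not\equiv 0$ by a preliminary permutation congruence bringing a nonzero diagonal entry into position $(1,1)$.
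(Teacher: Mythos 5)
Your induction on $m$ is correct and, once unrolled, is the same construction the paper carries out iteratively in Section~\ref{sec_NumImp} and Proposition~\ref{prop_alg}: the one-step reduction \eqref{eq:SchmudProc}--\eqref{eq:SchmudRel} applied to the trailing block $\alpha(\alpha C-\beta^\mathrm{T}\beta)$, with the block-embedded inner transforms composed onto $X_\pm$ and the scalar factors telescoping into $b=\alpha^2 b'$. The only cosmetic difference is your asymmetric normalization $\diag(1,\bU_-)$ versus $\diag(b',\bU_+)$ in the two embeddings (the paper uses $\diag(\alpha_i I,\bX_{i\pm})$ on both sides), which merely changes the particular polynomials $d_j$ produced, not the validity of \eqref{eq:SchmudDec}; your closing remarks on the degenerate case $\alpha\equiv 0$ and on avoiding $\bX_+=b\,\bX_-^{-1}$ match the paper's own caveats.
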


\subsection{Algorithmetic implementation}\label{sec_NumImp}

Even though Schm\"{u}dgen's proof (of Proposition \ref{prop:Schmud}) suggests us an algorithm to find 
$b, \bX_{\pm}$ and $\bD,$ 
it was not numerically clarified in his seminal paper. 
In this section, 
we  
explain in more detailed 
how 
Schm\"{u}dgen's procedure \eqref{eq:SchmudProc}
is applied
to obtain 
$b, \bX_{\pm}$ and $\bD$ as in Proposition \ref{prop:Schmud},
step by step such that 
one can implement the algorithm in any scientific programming language.

Given a polynomial matrix $\bF\in \Sy^m \mathbb{R}[t]$ 
that is partitioned as  
$$
\bF = 
\left[
	\begin{array}{cc}
		\alpha	&	\beta\\
		\beta^\mathrm{T}	&	\bC
	\end{array}
	\right],
\text{ where } 	\bC 		\in \Sy^{m-1} \mathbb{R}[t],\quad
				\beta 	\in \mathbb{R}[t]^{1\times (m-1)}.
$$
%
With the help of the procedure in \eqref{eq:SchmudProc},
our algorithm below starts with 
$ \bF= [f_{ij}^{(0)}]  \in \Sy^m\R[t]$
and 
\begin{equation}\label{eq:Iter0}
\alpha_0 = f_{11}^{(0)},\quad
\bX_{0\pm} = 
\left[
		  \begin{array}{cc}
				\alpha_0 	& 0\\
				\pm\beta_0^\mathrm{T}	& \alpha_0 I
		  \end{array}
 \right],\quad
\widetilde{\bF}_{0}  = 
				\left[
		  		\begin{array}{cc}
					\alpha_0^3 	& 0\\
					0			& \bB_0
		  		\end{array}
		  		\right],
		  		\enskip
		  		\bD^{(0)} = \widetilde{\bF}_{0},
\end{equation}
where
$\bB_0=\alpha_0^2 \bC_0-\alpha_0\beta_0^\mathrm{T}\beta_0\in \Sy^{m-1}\R[t].$
The next iteration will apply 
procedure \eqref{eq:SchmudProc}
to the matrix $\bB_0.$

We would emphasize that, afterwards,
the matrices with subscripts
$\bX_{i+}, \bX_{i-},$
$\bF_i \in \Sy^{m-i}\R[t]$ have sizes 
that are changed through the different iterations;
while
the others,
indexed by superscripts such as
$b^{(i)}, \bX_{\pm}^{(i)},$ $\bD^{(i)},$
will be updated
from $\bX_{i+}, \bX_{i-}, \bF_i$
as in Proposition \ref{prop_alg}.
The matrices with superscripts  
must have the same size as $\bF$ at every iteration.
The algorithm proceeds by induction (on $m$) until 
$\bB_i\in \Sy^1 \mathbb{R}[t]$ or $\bB_i$ is identically zero,
i.e., 
$\bD^{(i)}$ is diagonal
and the outputs are 
$\bD= \bD^{(i)},$
$ \bX_{\pm} = \bX_{\pm}^{(i)}, b = b^{(i)}.$

From the computational point of view, 
the matrix $\bD$ should be treated as 
its diagonal vector.
We thus only need to compute its main diagonal element by element. 
In other words, 
at Iteration $i,$ for instant, 
we only need to compute $d_i$ (exactly equals to $\alpha_i^3$) and update $d_0, d_1,\ldots,d_{i-1},$ 
but do not pay attention for $d_r, i< r\leq m.$ 


%
%
%

Below, 
we will see in our algorithm that 
the data at iteration $i$ ($i\geq 2$) is updated 
from that at iteration $i-1.$
More precisely,
suppose we are given  
\begin{equation}\label{eq:Iteri}
\alpha_{i-1}, \enskip
\bX_{\pm}^{(i-1)} 
 ,\enskip
\bD^{(i-1)}=
				\left[
		  		\begin{array}{cc}
					\diag(d_0^{(i-1)}, \ldots, d_{i-1}^{(i-1)}) 	& 0\\
					0			& \bB_{i-1}
		  		\end{array}
		  		\right],
\end{equation}
where
$
\bB_{i-1}=\alpha_{i-1}^2 \bC_{i-1}-\alpha_{i-1}\beta_{i-1}^\mathrm{T}\beta_{i-1}\in \Sy^{m-i+1}\R[t].$
Then by applying procedure \eqref{eq:SchmudProc} to
$\bF_{i} = \widetilde{\bB}_{i-1} =[f_{rs}^{(i)}]$
we compute
\begin{equation}\label{eq:PreIteri1}
\alpha_{i} = f_{11}^{(i)}, \enskip
\bX_{i\pm} = 
\left[
		  \begin{array}{cc}
				\alpha_{i} 	& 0\\
				\pm\beta_i^\mathrm{T}	& \alpha_i I
		  \end{array}
 \right],\enskip
\widetilde{\bF}_i=
				\left[
		  		\begin{array}{cc}
					\alpha_i^3 	& 0\\
					0			& \bB_i
		  		\end{array}
		  		\right] \in \Sy^{m-i}\R[t] 
\end{equation}
that satisfy the relation \eqref{eq:SchmudRel}:
\begin{equation}\label{eq:PreIteri2}
 \bX_{i+}\bX_{i-} = \bX_{i-}\bX_{i+} = \alpha_i^2 I,\quad
 \alpha_i^4 \bF_i = \bX_{i+} \widetilde{\bF}_i \bX_{i+}^\mathrm{T},\quad
 \widetilde{\bF}_i = \bX_{i-} \bF_i \bX_{i-}^\mathrm{T}.
\end{equation}	
One then updates
\begin{equation}\label{eq:Iteri1}
b^{(i)},\quad
\bX_{\pm}^{(i)}, \quad
\bD^{(i)}=
				\left[
		  		\begin{array}{cc}
					\diag(d_0^{(i)}, \ldots, d_{i}^{(i)}) 	& 0\\
					0			& \bB_{i}
		  		\end{array}
		  		\right]
		  		\in \Sy^m\R[t],
\end{equation}
where $d_k^{(i)}$'s are determined as in Proposition \ref{prop_alg} below.

To unify the notations and to make the readers more convenient,
we 
implement two first iterations as follows.


\noindent
\textbf{Iteration 0.} 
\textit{Define 
$\alpha_0, \bX_{0\pm},$ $\bB_0\in \Sy^{m-1}\R[t]$ and $\widetilde{\bF}_0 \in \Sy^{m}\R[t]$}
from $\bF$ as in \eqref{eq:Iter0}, then put
$$
b^{(0)} = \alpha_0^2,
\quad \bX_{\pm}^{(0)} = \bX_{0\pm} 
\text{ and } 
\bD^{(0)} =\widetilde{\bF}_0 \in \Sy^m\R[t],\quad
d_0^{(0)} = \alpha_0^3.
$$
It is clear that
\eqref{eq:SchmudRel} holds:
\begin{equation}\label{eq:RelIter0}
\bX_+^{(0)} \bX_+^{(0)} = \bX_-^{(0)} \bX_+^{(0)} = \alpha_0^2 I_m,
\enskip
\alpha_0^4 \bF = \bX_+^{(0)} \bD^{(0)} [\bX_+^{(0)}]^\mathrm{T}, 
\enskip
\bD^{(0)}  = \bX_-^{(0)} \bF_0 [\bX_-^{(0)}]^\mathrm{T}.
\end{equation}

\noindent
\textbf{Iteration 1.} 
Set $\bF_1= \bB_0$ and
determine
$\alpha_1, \bX_{1\pm},$ 
$\widetilde{\bF}_1 =
\begin{bmatrix}
\alpha_1^3 & 0 \\ 0 & \bB_1
\end{bmatrix} \in \Sy^{m-1}\R[t]$
by applying procedure \eqref{eq:SchmudProc} to $\bF_1.$
These satisfy relation \eqref{eq:SchmudRel}:
$$
\bX_{1+} \bX_{1-} = \bX_{1-}\bX_{1+} = \alpha_1^2, \enskip
\alpha_1^4 \bF_1 = \bX_{1+} \widetilde{\bF}_1 \bX_{1+}^\mathrm{T}, \enskip
\widetilde{\bF}_1 = \bX_{1-} \bF_1  \bX_{1-}^\mathrm{T}.
$$
This, combining with \eqref{eq:RelIter0}, implies that
\begin{align}
\alpha_1^4 (\alpha_0^4 \bF) &= \bX_{0+}(\alpha_1^4 \bD^{(0)}) \bX_{0+}^\mathrm{T}		\nonumber\\
&= \bX_{0+}
	\left[
	\begin{array}{cc}
	\alpha_1^4\alpha_0^3	&	0\\
	0	&	\bX_{1+}\widetilde{\bF}_1 \bX_{1+}^\mathrm{T}
	\end{array}
	\right] 
	\bX_{0+}^\mathrm{T}		\nonumber\\
&=\bX_{0+}
	\left[
	\begin{array}{cc}
	\alpha_1	&	0\\
	0			&	\bX_{1+}
	\end{array}
	\right]
	\left[
	\begin{array}{cc}
	\alpha_1^2\alpha_0^3	&	0\\
	0						&	\widetilde{\bF}_1
	\end{array}
	\right]
	\left[
	\begin{array}{cc}
	\alpha_1	&	0\\
	0			&	\bX_{1+}^\mathrm{T}
	\end{array}
	\right]
	\bX_{0+}^\mathrm{T}\label{step1X+}
\end{align}
and
\begin{align}
\left[
\begin{array}{cc}
	\alpha_1^2\alpha_0^3	&	0\\
	0	&	\widetilde{\bF}_1
	\end{array}
	\right]
&=
\left[
\begin{array}{cc}
	\alpha_1^2\alpha_0^3	&	0\\
	0	&	\bX_{1-} \bF_1  \bX_{1-}^\mathrm{T}
\end{array}
\right] \nonumber \\
&=
\left[
\begin{array}{cc}
	\alpha_1	&	0\\
	0			&	\bX_{1-}
\end{array}
\right]
\left[
\begin{array}{cc}
	\alpha_0^3	&	0\\
	0			&	\bF_1
\end{array}
\right]
\left[
\begin{array}{cc}
	\alpha_1	&	0\\
	0			&	\bX_{1-}^\mathrm{T}
\end{array}
\right]	\nonumber \\
&=
\left[
\begin{array}{cc}
	\alpha_1	&	0\\
	0			&	\bX_{1-}
\end{array}
\right]
\bX_{0-} \bF_0 \bX_{0-}^\mathrm{T}
\left[
\begin{array}{cc}
	\alpha_1	&	0\\
	0			&	\bX_{1-}^\mathrm{T}
\end{array}
\right]	.	\label{step1X-}
\end{align}
We hence obtain that
 $b^{(1)} = \alpha_1^2\alpha_0^2$ and
\begin{align*}
\bX_+^{(1)} &= \enskip
					{\bX_0}_+ 
					\left[
						\begin{array}{cc}
						\alpha_1	& 0\\
						0			& {\bX_1}_+
					 	\end{array}
					 	\right] \enskip
				= \enskip
					\left[
						\begin{array}{cc}
						\alpha_1\alpha_0	& 0\\
						-{\bX_1}_-\beta_0^\mathrm{T}	& \alpha_0 {\bX_1}_-
			         \end{array}
			      \right],\\
\bX_-^{(1)} &= \enskip
				\left[
					\begin{array}{cc}
					\alpha_1	& 0\\
					0			& {\bX_1}_-
				  \end{array}
				  \right]
				  {\bX_0}_- \enskip
				=	\enskip
						\left[
						\begin{array}{cc}
						\alpha_1\alpha_0	& 0\\
						-{\bX_1}_-\beta_0^\mathrm{T}	& \alpha_0 {\bX_1}_-
			  			\end{array}
			  		\right]				  ,\\
\bD^{(1)} &= \enskip
				\left[
			\begin{array}{cc}
			\alpha_0^3\alpha_1^2	& 0\\
			0						& \widetilde{\bF}_1
			\end{array}
			\right]
			=
\left[
\begin{array}{ccc}
	\alpha_1^2\alpha_0^3	&	0\\
	0							& \alpha_1^3 &0\\
	0	&				0		&\bB_1
	\end{array}
	\right]			
			, \enskip
\begin{bmatrix}
d_0^{(1)}\\
d_1^{(1)}
\end{bmatrix}
=\begin{bmatrix}
\alpha_1^2 \alpha_0^3\\
\alpha_1^3
\end{bmatrix}
= \alpha_1^2
\begin{bmatrix}
d_0^{(0)}\\
\alpha_1
\end{bmatrix}
.			
\end{align*}
Provided by
\eqref{step1X+} and \eqref{step1X-} ,
 $\bX_{\pm}^{(1)}$ and $\bD^{(1)}$ 
 also satisfy the relation \eqref{eq:SchmudRel}:
$$
\bX_+^{(1)} \bX_-^{(1)} = \bX_-^{(1)} \bX_+^{(1)} = b^{(1)}I_m, \enskip
[b^{(1)}]^2\bF=\bX_+^{(1)} \bD^{(1)} [\bX^{(1)}]^\mathrm{T}_+, \enskip
\bD^{(1)}=\bX_-^{(1)} \bF  [\bX_-^{(1)}]^\mathrm{T}.
$$
%
%
The diagonal of $\bD^{(1)},$ as mentioned earlier,
should be 
updated only $d_1^{(1)}=d_1^{(0)}\alpha_1^2$
 such that $\bD^{(1)}$ has the form \eqref{eq:Iteri}.

The two early iterations show us the numerical formulas to compute $b, \bX_{\pm}$ and $\bD$ in the next iterations such that 
they satisfy \eqref{eq:SchmudRel}. 
The mathematical explanation 
in more detail, compared with \eqref{eq:Iteri}, \eqref{eq:PreIteri1} and \eqref{eq:Iteri1},
is showed in the following proposition.

\begin{prop}\label{prop_alg}
With notation as in \eqref{eq:Iteri}, \eqref{eq:PreIteri1} and \eqref{eq:Iteri1}.
Suppose the data at Iteration $i-1$ with $i\geq 1$
is already given: 
$
b^{(i-1)}, \bX_{\pm}^{(i-1)}
$
and
$\bD^{(i-1)}.$
Then for all 
$i=2,\ldots, m-1,$
we have the update rules
\begin{align} \label{eq:update}
b^{(i)} 			&= b^{(i-1)}\alpha_i^2,  \nonumber \\ 
\bX^{(i)}_+ 	& = 				
							\bX_+^{(i-1)}\cdot
							\left[
								\begin{array}{cccc}
								\alpha_i	I_i&		&			&0\\
								0			&		&			&\bX_{i+}
								 \end{array}
							\right],\quad
\bX^{(i)}_-  = 
							\left[
								\begin{array}{cccc}
								\alpha_i	I_i&		&			&0\\
								0			&		&			&\bX_{i-}
								 \end{array}
							\right]
							\cdot 
							\bX_-^{(i-1)},			 \\
\bD^{(i)} &= 
						\left[
							\begin{array}{clll}
							\alpha_i^2 \cdot \diag\left(d_0^{(i-1)}, \ldots, d_{i-1}^{(i-1)}, \alpha_i \right)	&		&			&0\\
							0			&		&			& 					
							\bB_i
			 				\end{array}
			 			\right]. \nonumber
\end{align}
These matrices furthermore 
satisfy the relation described in \eqref{eq:SchmudRel}:
\begin{align}\label{eq:Prop} 
\bX^{(i)}_+\bX^{(i)}_- = \bX^{(i)}_-\bX^{(i)}_+ = b^{(i)}I,\enskip
{b^{(i)}}^2 \bF=\bX^{(i)}_+\bD^{(i)} {\bX_+^{(i)}}^{ \mathrm{T}},\enskip
\bD^{(i)}=\bX^{(i)}_- \bF {\bX^{(i) }_-}^{\mathrm{T}}.
\end{align}
\end{prop}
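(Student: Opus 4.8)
The statement is an induction on the iteration index $i$, and the two initial iterations already carried out in the text serve as the base case: they establish \eqref{eq:RelIter0} and the displayed relations for $\bX_\pm^{(1)}$ and $\bD^{(1)}$ at level $1$. So the plan is to assume that $b^{(i-1)}$, $\bX_\pm^{(i-1)}$ and $\bD^{(i-1)}$ satisfy \eqref{eq:Prop} at level $i-1$, and to deduce the same three identities at level $i$ from the one-step Schm\"{u}dgen relations \eqref{eq:PreIteri2} obtained by applying \eqref{eq:SchmudProc} to $\bF_i$. The single most useful reduction is to write the update rules \eqref{eq:update} in the factored form $\bX_+^{(i)} = \bX_+^{(i-1)} L_+$ and $\bX_-^{(i)} = L_- \bX_-^{(i-1)}$, where $L_\pm = \begin{bmatrix} \alpha_i I_i & 0 \\ 0 & \bX_{i\pm} \end{bmatrix}$; everything then follows from block-diagonal multiplication together with \eqref{eq:PreIteri2} and the induction hypothesis.

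First I would dispatch the commutation identity. Using $\bX_{i+}\bX_{i-} = \bX_{i-}\bX_{i+} = \alpha_i^2 I$ from \eqref{eq:PreIteri2}, the block-diagonal products collapse to $L_+ L_- = L_- L_+ = \alpha_i^2 I_m$. Hence $\bX_+^{(i)}\bX_-^{(i)} = \bX_+^{(i-1)}(L_+L_-)\bX_-^{(i-1)} = \alpha_i^2 \bX_+^{(i-1)}\bX_-^{(i-1)}$, which equals $\alpha_i^2 b^{(i-1)} I = b^{(i)} I$ by the induction hypothesis and $b^{(i)} = b^{(i-1)}\alpha_i^2$; the reversed order is identical since $b^{(i-1)}$ is a scalar polynomial and commutes through.

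The heart of the argument is the two congruence identities, and here the key observation is a regrouping of $\bD^{(i)}$. Although \eqref{eq:Iteri1} exhibits $\bD^{(i)}$ as ``$i+1$ diagonal entries followed by $\bB_i$'', the grouping compatible with $L_\pm$ is $\bD^{(i)} = \begin{bmatrix} \alpha_i^2 \diag(d_0^{(i-1)}, \ldots, d_{i-1}^{(i-1)}) & 0 \\ 0 & \widetilde{\bF}_i \end{bmatrix}$, i.e.\ the first $i$ old diagonal entries scaled by $\alpha_i^2$, followed by the whole block $\widetilde{\bF}_i = \begin{bmatrix} \alpha_i^3 & 0 \\ 0 & \bB_i \end{bmatrix}$. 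With this grouping, conjugation by $L_-$ acts as $\diag(\cdot)\mapsto \alpha_i^2\diag(\cdot)$ on the leading block and as $\bF_i \mapsto \bX_{i-}\bF_i\bX_{i-}^\mathrm{T} = \widetilde{\bF}_i$ on the trailing block (recall $\bF_i$ is exactly the trailing block $\bB_{i-1}$ of $\bD^{(i-1)}$), so $L_- \bD^{(i-1)} L_-^\mathrm{T} = \bD^{(i)}$; combined with the hypothesis $\bX_-^{(i-1)} \bF [\bX_-^{(i-1)}]^\mathrm{T} = \bD^{(i-1)}$ this gives $\bX_-^{(i)} \bF [\bX_-^{(i)}]^\mathrm{T} = \bD^{(i)}$. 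Dually, conjugating the same grouping of $\bD^{(i)}$ by $L_+$ and using $\bX_{i+}\widetilde{\bF}_i \bX_{i+}^\mathrm{T} = \alpha_i^4 \bF_i$ from \eqref{eq:PreIteri2} yields $L_+ \bD^{(i)} L_+^\mathrm{T} = \alpha_i^4 \bD^{(i-1)}$, whence $\bX_+^{(i)} \bD^{(i)} [\bX_+^{(i)}]^\mathrm{T} = \alpha_i^4 \bX_+^{(i-1)}\bD^{(i-1)}[\bX_+^{(i-1)}]^\mathrm{T} = \alpha_i^4 [b^{(i-1)}]^2 \bF = [b^{(i)}]^2\bF$.

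The computations are routine block multiplications, so the only real obstacle is the bookkeeping: one must track that the leading diagonal block has size $i$, that $\bB_{i-1}$ (hence $\bF_i$) occupies precisely the trailing $(m-i)\times(m-i)$ block of $\bD^{(i-1)}$, and that the identity blocks $I_i$ in $L_\pm$ are sized to match. I would therefore state these dimension conventions explicitly once at the outset, so that the regrouping of $\bD^{(i)}$ and the two conjugations line up without ambiguity.
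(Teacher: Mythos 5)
Your proposal is correct and follows essentially the same route as the paper's proof: induction with Iterations 0--1 as the base case, the regrouping of $\bD^{(i)}$ into a leading scaled diagonal block followed by the full block $\widetilde{\bF}_i$, and the factorization through the block-diagonal matrices $L_\pm$ combined with the one-step relations \eqref{eq:PreIteri2}. The only difference is cosmetic: you verify the commutation identity explicitly (the paper leaves it implicit), and you index the step as $i-1\to i$ rather than $i\to i+1$.
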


\begin{proof}
Since the section of Iteration 1, 
the proposition is true with $i=1.$ 
Now suppose that the proposition is true for some 
$i\geq 1,$ 
namely,
$$
\bX_+^{(i)}\bX_-^{(i)} = \bX_-^{(i)}\bX_+^{(i)} = b^{(i)}I_m,\enskip 
{b^{(i)}}^2 \bF =\bX_+^{(i)}\bD^{(i)}{\bX^{(i)}_+}^\mathrm{T},\enskip
\bD^{(i)}=\bX^{(i)}_- \bF{\bX^{(i)}_-}^\mathrm{T},
$$
where $b^{(i)} = b^{(i-1)}\alpha_{i}^2 = \alpha_1^2\alpha_2^2\ldots\alpha_{i}^2,$
\begin{align*}
\bX_+^{(i)} = \bX_+^{(i-1)}\cdot
							\left[
								\begin{array}{cccc}
								\alpha_i	I_i&		&			&0\\
								0			&		&			&\bX_{i+}
								 \end{array}
							\right], \quad
\bX_-^{(i)} = 
							\left[
								\begin{array}{cccc}
								\alpha_i	I_i&		&			&0\\
								0			&		&			&\bX_{i-}
								 \end{array}
							\right]
			 		\cdot \bX_-^{(i-1)}
\end{align*}
and
$\bD^{(i)}$ is defined as in \eqref{eq:update}. 
That is
$$
d_k^{(i)} = \alpha_i^2 d_k^{(i-1)} \mbox{ for } 0\leq k\leq i-1,
\quad\mbox{and} \quad
d_i^{(i)} =\alpha_i^3.
$$
We will show that \eqref{eq:Prop} also holds for $i+1.$ 
By inductive hypothesis, 
with noting that  
${b^{(i+1)}} = \alpha_{i+1}^2 {b^{(i)}},$
one has
\begin{align*}
{b^{(i+1)}}^2 \bF 
&= \bX_{+}^{(i)} \left( \alpha_{i+1}^4 \bD^{(i)}\right) \bX_+^{(i)}
\\
&= \bX_{+}^{(i)}
	\left[
	\begin{array}{rl}
	\alpha_{i+1}^4 \alpha_i^2 \diag(d_0^{(i)}, \ldots, d_{i-1}^{(i)}, \alpha_i) \enskip	&	0\\
	0	\enskip &	\alpha_{i+1}^4 \bB_i
	\end{array}
	\right] 
	{\bX_{+}^{(i)}	}^\mathrm{T},
	\quad
	(\bB_i = \bF_{i+1})	\\
&= \bX_{+}^{(i)}
	\left[
	\begin{array}{rl}
	\alpha_{i+1}^4 \alpha_i^2 \diag(d_0^{(i)}, \ldots, d_{i-1}^{(i)}, \alpha_i) \enskip	&	0\\
	0	\enskip &	\bX_{(i+1)+} \widetilde{\bF}_{i+1} \bX_{(i+1)+}^\mathrm{T}
	\end{array}
	\right] 
	{\bX_{+}^{(i)}}^\mathrm{T}	,\\
&=\bX_{+}^{(i)}
	\left[
	\begin{array}{rl}
	\alpha_{i+1} I_{i+1} \enskip	&	0\\
	0	\enskip		&	\bX_{(i+1)+}
	\end{array}
	\right]
	\left[
	\begin{array}{rl}
	\alpha_{i+1}^2 \alpha_i^2 \diag(d_0^{(i)}, \ldots, d_{i-1}^{(i)}, \alpha_i) \enskip 	&	0\\
	0	\enskip					&	\widetilde{\bF}_{i+1}
	\end{array}
	\right] \times \\
	&\ \quad 
	\left[
	\begin{array}{rl}
	\alpha_{i+1} I_{i+1} \enskip	&	0\\
	0	\enskip		&	\bX_{(i+1)+}^\mathrm{T}
	\end{array}
	\right]	
	{\bX_{+}^{(i)}}^\mathrm{T}
	\\
&= \bX_{+}^{(i+1)}
	\left[
	\begin{array}{ccc}
	\alpha_{i+1}^2 \alpha_i^2 \diag(d_0^{(i)}, \ldots, d_{i-1}^{(i)}, \alpha_i) \enskip	&	0   & 0\\
	0  	& \alpha_{i+1}^3  & 0\\
	0		& 0	\enskip &	\bB_{i+1}
	\end{array}
	\right] 
	{\bX_{+}^{(i+1)}}^\mathrm{T}
	\\
&= \bX_{+}^{(i+1)} \bD^{(i+1)} {\bX_{+}^{(i+1)}}^\mathrm{T}.	
\end{align*}
This yields the form of $\bD^{(i+1)}$ and $\bX_{+}^{(i+1)}.$

For the last relation,
$\bX^{(i+1)}_-\bF{\bX^{(i+1)}_-}^\mathrm{T}= \bD^{(i+1)},$
we write
\begin{align*}
&\quad	
\bD^{(i+1)} =
	\left[
	\begin{array}{rl}
		\alpha_{i+1}^2 \alpha_i^2 \diag(d_0^{(i)}, \ldots, 			
		d_{i-1}^{(i)}, \alpha_i) \enskip 	&	0\\
		0	\enskip					&	\widetilde{\bF}_{i+1}
	\end{array}
	\right]\\ 
&=
	\left[
	\begin{array}{rl}
	\alpha_{i+1}^2 \alpha_i^2 \diag(d_0^{(i)}, \ldots, d_{i-1}^{(i)}, \alpha_i) \enskip 	&	0\\
	0	\enskip					&	\bX_{(i+1)-}\bF_{i+1} \bX_{(i+1)-}^\mathrm{T}
	\end{array}
	\right]	 \\
& =
	\left[
	\begin{array}{rl}
	\alpha_{i+1} I_{i+1} \enskip	&	0\\
	0	\enskip		&	\bX_{(i+1)-}
	\end{array}
	\right]
\underbrace{
	\left[
	\begin{array}{rl}
	 \alpha_i^2 \diag(d_0^{(i)}, \ldots, d_{i-1}^{(i)}, \alpha_i) \enskip 	&	0\\
	0	\enskip					&	\bF_{i+1} 
	\end{array}
	\right]
	}_{=\  \bD^{(i)}, \mbox{ \small as in \eqref{eq:update}}.}
	\left[
	\begin{array}{rl}
	\alpha_{i+1} I_{i+1} \enskip	&	0\\
	0	\enskip		&	\bX_{(i+1)-}^\mathrm{T}
	\end{array}
	\right].
\end{align*}
But 
$\bD^{(i)} = \bX_-^{(i)} \bF \bX_-^{(i)}$ provided by \eqref{eq:Prop}, 
we can conclude
$$
\bD^{(i+1)} =
	\left[
	\begin{array}{rl}
	\alpha_{i+1} I_{i+1} \enskip	&	0\\
	0	\enskip		&	\bX_{(i+1)-}
	\end{array}
	\right]
\bX_-^{(i)} \bF \bX_-^{(i)}
	\left[
	\begin{array}{rl}
	\alpha_{i+1} I_{i+1} \enskip	&	0\\
	0	\enskip		&	\bX_{(i+1)-}^\mathrm{T}
	\end{array}
	\right].
$$
Thus
$$
\bX_-^{(i+1)} =
	\left[
	\begin{array}{rl}
	\alpha_{i+1} I_{i+1} \enskip	&	0\\
	0	\enskip		&	\bX_{(i+1)-}
	\end{array}
	\right]
	\bX_-^{(i)},
$$
and we are done.
\end{proof}

\begin{rema}\rm
We would note that 
$\bD^{(i)}$ in Proposition \ref{prop_alg} can also expressed via 
$\bD^{(i-1)}$
as
$$
\bD^{(i)} = \bD^{(i-1)}\cdot\left(\begin{array}{llll}
							\alpha_i^2	&		&			&0\\
										&\ddots	&			&\\
										&		&\alpha_i^2 	&\\
							0			&		&			& \bF_i^{-1}\widetilde{\bF}_i
			 \end{array}\right),
$$
if
$\bF_i$
is invertible. 
\end{rema}

Proposition \ref{prop_alg} gives us formulas updating the outputs $b, \bX_{\pm}$ and $\bD$ in each step of the algorithm. 
The algorithm supposes that $\alpha_i\neq 0$ at all iterations $i.$
Otherwise, 
according to \cite{Schmudgen09}, 
one can find an orthogonal matrix 
$\mathbf{T}$ 
such that
the $(1,1)$-th entry of 
$\mathbf{T}\bF_i \mathbf{T}^\mathrm{T}$
is nonzero.
A more clear formula can also be found in \cite[Proof of Proposition 5]{Cimpric2012}.

\noindent
\rule[0ex]{\linewidth}{1.5pt}
\noindent \textbf{Algorithm 1}. Dialgonalizing a polynomial matrix.  \\ 
\rule[1ex]{\linewidth}{1.5pt}

\noindent 
\textsc{Input}: \hskip3mm
	 $0\neq \bA\in \Sy^m \mathbb{R}[t] , m \geq2$
	 
\noindent 
\textsc{Output}:
	polynomials $b, d_j \in \mathbb{R}[t], j= \overline{1,m},$ 
	and matrices $\bX_{+}, \bX_{-}\in \mathbb{R}[t]^{m\times m}$
\vskip3mm
\noindent
$\bullet$ \textsf{Assign:}
$\alpha_0 ,$
$\bX_{\pm} = \bX_{0,\pm},$
$\widetilde{\bF}_0 = \widetilde{\bD}^{(0)}$
and
$\bB_0$
as in \eqref{eq:Iter0}.
	 
\noindent
$\bullet$ \textsf{At Iteration} $i\geq 1:$	
\begin{enumerate}[\quad \it Step 1:]
\item 
	Set $\bF_i = \bB_{i-1}.$
\item Determine $\alpha_i, \bX_{i-}, \bX_{i+}, \widetilde{\bF}_i$ from $\bF_i$ as in \eqref{eq:PreIteri1} which satisfy \eqref{eq:PreIteri2}.

\item Update $b^{(i)}, \bX^{(i)}_{\pm}, \bD^{(i)}$ as in \eqref{eq:update}.


\item If $\bB_{i}\in \Sy^1 \mathbb{R}[t]$ or $\bB_i$ is zero 
		then stop.\\
	  Else set $i=i+1$ then go to Step 1.
\end{enumerate}
\vskip-5mm
\rule[0ex]{\linewidth}{1.5pt}

\vskip3mm
In the case Algorithm 1 stops at 
Iteration $i$ with
$0\neq \bB_i \in \Sy^1\R[t],$ 
it defines the $(i+1)$-th diagonal element in $\bD^{(i+1)}$
as $\bB_i.$ 
In this case we set this element as $\alpha_{i+1}.$
It is clear that $\bF$ is positive semidefinite on $\R^n$
if and only if the diagonal elements $d_j$'s are nonnegative on $\R^n.$
From Algorithm 1, 
$d_j$'s can be replaced by more simple polynomials as follows.

\begin{coro}\label{coro:kgSets}
The polynomial matrix $\bF$ in Proposition \ref{prop:Schmud} is positive semidefinite on $\R^n$ if and only if
$\alpha_0, \alpha_1, \ldots, \alpha_{i+1},$
$i=0,1,\ldots, m-1,$
are nonnegative on $\R^n.$
In other words,
\begin{align*}
\bF \succeq 0 \mbox{ on } \R^n\
&\Longleftrightarrow \enskip
	d_j \geq 0 \mbox{ on } \R^n, \forall j=1,\ldots,m\\
&\Longleftrightarrow \enskip
	\alpha_j \geq 0 \mbox{ on } \R^n, \forall j=0,\ldots,i+1.	
\end{align*}
\end{coro}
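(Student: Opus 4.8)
The plan is to split the double equivalence into its two halves and prove each separately, reading off the needed structural facts from Propositions \ref{prop:Schmud} and \ref{prop_alg}. For the first equivalence, that $\bF\succeq 0$ on $\R^n$ if and only if every $d_j\geq 0$ on $\R^n$, I would use the two factorizations $\bD=\bX_-\bF\bX_-^\mathrm{T}$ and $b^2\bF=\bX_+\bD\bX_+^\mathrm{T}$ supplied by Proposition \ref{prop:Schmud}. For the forward direction, fix $x\in\R^n$; since congruence preserves positive semidefiniteness, $\bD(x)=\bX_-(x)\bF(x)\bX_-(x)^\mathrm{T}\succeq 0$ whenever $\bF(x)\succeq 0$, and because $\bD$ is diagonal this forces $d_j(x)\geq 0$ for every $j$. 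For the converse, nonnegativity of all $d_j$ gives $\bD(x)\succeq 0$, hence $b(x)^2\bF(x)=\bX_+(x)\bD(x)\bX_+(x)^\mathrm{T}\succeq 0$, so $\bF(x)\succeq 0$ at every $x$ with $b(x)\neq 0$. Since $b=\alpha_0^2\alpha_1^2\cdots$ is a nonzero polynomial under the standing assumption $\alpha_i\not\equiv 0$, its zero set is nowhere dense and the positive semidefinite cone is closed; continuity of the entries of $\bF$ then propagates $\bF\succeq 0$ from the dense set $\{b\neq 0\}$ to all of $\R^n$.

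For the second equivalence I would first make the dependence of the $d_j$ on the $\alpha_j$ explicit. Iterating the update rule of Proposition \ref{prop_alg}, namely $d_k^{(i)}=\alpha_i^2 d_k^{(i-1)}$ for $k<i$ together with $d_i^{(i)}=\alpha_i^3$, a short induction yields the closed form $d_k=\alpha_k^3\prod_{j>k}\alpha_j^2=\alpha_k\bigl(\alpha_k\prod_{j>k}\alpha_j\bigr)^2$; that is, each $d_k$ is $\alpha_k$ times a perfect square. The implication $\alpha_k\geq 0\Rightarrow d_k\geq 0$ is then immediate. Conversely, on the dense open set where the single nonzero polynomial $P:=\prod_j\alpha_j$ does not vanish, the square factor is strictly positive, so $\alpha_k$ and $d_k$ share the same sign there; hence $d_k\geq 0$ forces $\alpha_k\geq 0$ on $\{P\neq 0\}$, and by continuity $\alpha_k\geq 0$ on all of $\R^n$. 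Chaining the two equivalences over all indices then gives the statement.

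I expect the only genuine subtlety to be the passage from ``nonnegative on a dense set'' to ``nonnegative everywhere,'' which is exactly where the standing hypothesis $\alpha_i\not\equiv 0$ (so that $b$ and $P$ are genuinely nonzero polynomials with nowhere-dense zero sets) does the essential work; the degenerate case $\alpha_i\equiv 0$ is already routed through the orthogonal-transformation fix noted after Proposition \ref{prop_alg}. Everything else is the routine induction establishing the product formula for $d_k$ and the elementary fact that congruence preserves the sign-definiteness of a symmetric matrix.
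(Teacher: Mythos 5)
Your proof is correct and follows exactly the route the paper intends: the congruences $\bD=\bX_-\bF\bX_-^{\mathrm{T}}$ and $b^2\bF=\bX_+\bD\bX_+^{\mathrm{T}}$ from Proposition \ref{prop:Schmud} for the first equivalence, and the product formula $d_k=\alpha_k\bigl(\alpha_k\prod_{j>k}\alpha_j\bigr)^2$ read off from the update rules of Proposition \ref{prop_alg} for the second. The paper in fact states this corollary without proof (``It is clear that\ldots''), so your write-up supplies the details it omits, in particular the density-plus-closedness argument needed to pass from $\{b\neq 0\}$ to all of $\R^n$, which is the one point that is not entirely trivial and which you handle correctly.
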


Another corollary of Algorithm 1 is as follows.
Cimpri\v{c} \cite[Proposition 5]{Cimpric2012}, see also in \cite{Trinh15}, 
proves that for a set $\mathfrak{F}$ of polynomial matrices in  $\Sy^m\R[x],$ one can find a set of scalar polynomials
$\mathfrak{F}'\subset \R[x]$ such that
$$
\left\{
x\in \R^n|\ \bF(x) \succeq 0, \forall \bF\in \mathfrak{F}
\right\}
=
\left\{
x\in \R^n|\ f(x) \geq 0, \forall f\in \mathfrak{F}'
\right\}.
$$
Even though his proof is suggested by Schm\"{u}dgen's procedure,
it is only inductive on matrices of smaller size than $m,$
so that the resulting set of scalar polynomials is not much clear.
With the help of Corollary \ref{coro:kgSets}, such a set of scalar polynomials can be verified as
\begin{equation}\label{eq:kgSets}
\left\{
	x\in \R^n|\ \bF(x) \succeq 0, \forall \bF\in \mathcal{F}
\right\}
=
\bigcap_{\bF \in \mathcal{F}}
 \left\{
        x\in \R^n \Big|
        {\scriptsize
        \begin{array}{ll}
        \alpha_j^{(\bF)}(x) \geq 0, & \alpha_j^{(\bF)}\in \R[x],\\
								&  j=0,\ldots, i_{\mathcal{F}}+1
        \end{array} 
        }	
\right\}.
\end{equation}

\subsection{Numerical illustrations}

In \textsf{OCTAVE}, 
a polynomial will be stored as a vector of its coefficients.
For example, the polynomial $f(t) = t^2$ will be saved as the vector
$f= [0 \enskip 0 \enskip 1].$ 
The syntax ``\texttt{conv}'' of vectors 
was used to compute the multiplication of polynomials.
A polynomial matrix will be saved as a cell,
and the multiplication of matrices will be done as the cell multiplication.

\subsubsection{Univariate polynomials}

Table \ref{tab:SchmudTest} shows some numerical tests for Schm\"{u}dgen's decomposition 
of univariate polynomial matrices.
\begin{table}[h]
\centerline{
\begin{tabular}{l 	l  |   c| c| c| l }
\\
 $m$    	&  	$d	$
 &  Err. of   
 &  Err. of
 &  Err. of
 & CPU time 
\\
 &  	
 &  $(\bX_+\bX_- -bI)$          
 &  $(\bX_-\bF\bX_-^{\mathrm{T}} - \bD)$         
  &  $(\bX_+\bD\bX_+^{\mathrm{T}} -b^2\bF)$ 
 & (seconds)
\\ \hline \hline
2 	& 10   	& 1.6e-16	   &  2.9e-16  & 1.3e-16  & 0.020082\\
2 	& 50   	& 1.8e-16	& 2.6e-16   & 1.9e-16  & 0.023105\\
2 	& 100   	& 3.0e-16	&  3.7e-16  & 2.8e-16  & 0.026690\\
2 	& 1000  	& 1.0e-15	&  1.4e-15  & 8.5e-16  & 0.087682\\
3 	& 10  	& 3.8e-16	&  4.9e-16  & 1.9e-16  & 0.051460\\
3 	& 50  	& 1.4e-15	&  1.4e-15  & 3.9e-16  & 0.063355\\
3 	& 100 	& 1.2e-15	&  1.4e-15  & 4.5e-16 & 0.068851\\
3 	& 1000 	& 3.6e-15	&  4.5e-15  & 1.5e-15  & 0.96089\\
5 	& 50 	   & 6.7e-15	&  6.3e-15  & 1.1e-15  & 0.65835\\
\end{tabular}
}
\caption{Numerical tests for Schm\"{u}dgen's decomposition \eqref{eq:SchmudDec}
with \textsf{OCTAVE}.
} \label{tab:SchmudTest} 
\end{table}
We give here in detailed explaination for the case 
$(m,d) = (3,2).$
That is,
$\bF(x) \in \Sy^3\R[t]$
with entries are univariate polynomials of degree 2,
and
their coefficient vectors are randomly chosen in $(0,1)^{4}.$
\textsf{OCTAVE} produces  
the $(i,j)$-entries of $\bF$ 
in form $\bF[i,j]$  as follows
$$
\begin{array}{lllllllllll}
\bF & =
\\
  &\{ \\
 & [1,1] &=
     0.9239908 &  0.0092346 &  0.6041765
	\\
 & [2,1] &=
     0.38464   & 0.29627 &  0.56127
	\\
 & [3,1] &=
     0.69889   & 0.43163 &  0.20554
	\\
 & [1,2] &=
     0.38464   & 0.29627 &  0.56127
	\\
 & [2,2] &=
     0.42587   & 0.32317 &  0.57895
	\\
 & [3,2] &=
      0.91937  & 0.16887 &  0.48818
	\\
 & [1,3] &=
     0.69889   & 0.43163 &  0.20554
	\\
 & [2,3] &=
     0.91937  & 0.16887 &  0.48818
	\\
 & [3,3]& =
     0.77315   & 0.19066 &  0.24987
	\\
 &\}
\end{array}
$$
Algorithm 1 
gives us the polynomial $b$ of degree 16,
$\bX_p$'s and $\bX_m$'s entries are of degree 8, 
and diagonal elements' degrees are all 18.

We next give a particular example that is also returned later.

\begin{exam}\label{exam:1}
\rm
We  start with an example of $2\times 2$ univariate-polynomial matrix 
$$
\bF = 
\begin{bmatrix}
f & g \\ g & h
\end{bmatrix}, \quad
f(t) = t^2,\
g(t) = t(t+1),\
h(t) = (t+1)^2.
$$
Applying the algorithm we then obtain
$$
b = t^4, \enskip
\bX_+ =
\begin{bmatrix}
t^2 & 0\\ t(t+1)& t^2
\end{bmatrix}, \enskip
\bX_- =
\begin{bmatrix}
t^2 & 0\\ -t(t+1) & t^2
\end{bmatrix}, \enskip
\bD =
\begin{bmatrix}
t^6 & 0\\ 0 & 0
\end{bmatrix}.
$$

In addition,
it is not hard to see $\bF$ is positive semidefinite on $\R$ and 
$$
\{
t\in \R|\ \bF(t) \succeq 0, \forall t\in \R
\}
=
\{
t\in \R|\ \alpha_0= t^2\geq 0,\  \forall t\in \R
\} 
= \R,
$$
which illustrates \eqref{eq:kgSets}.
\end{exam}

\subsubsection{Multivariate polynomials}

To explain the tests in this situation, 
we need some more notations. 
Fix three positive numbers $m,n$ and $d.$
Let
\begin{align}
\Omega:=\Omega_{n,d}& = \{\alpha \in \Z_+^n:\ |\alpha| := \sum_{i=1}^n \alpha_i \leq d\},\\
\Gamma:=\Gamma_{n,d} &= \Omega(n,d) + \Omega(n,d) 
             = \{\gamma \in \Z_+^n:\ |\gamma| \leq 2d\}\\ \notag
            & = \Omega_{n,2d}.
\end{align}
The cardinalities of $\Omega$ and $\Gamma$ are well-known and are determined by
$|\Omega| = \binom{n+d}{n}$ 
and 
$|\Gamma| = \binom{n+2d}{n},$
respectively.

Our experiments in this situation
admits the 
\textit{lexicographic order} 
of monomials:
\begin{align*}
x^\alpha \leqq_{lex} x^\beta  \enskip
& \Longleftrightarrow \enskip
\alpha \leqq_{lex} \beta, \qquad (\alpha, \beta\in \Omega_{n,l})\\ \enskip
& \Longleftrightarrow \enskip
\alpha - \beta =(0, \ldots,0, \alpha_i -\beta_i,  \ldots, \alpha_l-\beta_l), \quad \alpha_i-\beta_i >0.
\end{align*}
For example, for
	$m=1,$ $n=2,$ $d=6$ then
	$$
	\Omega_{2,6} =\{(0,0), (0,1), \ldots, (0,6),
	   (1,0), \ldots, (1,5), \ldots, (5,0) , (5,1), (6,0)\}.
	$$ 
	The polynomial's coefficient vector is expressed with respect to the ``lex'' order as in $\Omega.$
	In particular, the Motzkin polynomial mentioned earlier will be  written as
	$
	f^M(x,y)= 1 -3x^2y^2 + x^2y^4 + x^4y^2,
	$
and its coefficient vector will be stored in \textsf{OCTAVE} as
$$
\mathbf{f}^{M} 
= [\overbrace{1}^{(0,0)}
   \enskip 0
   \enskip \ldots
   \enskip 0
    \enskip \overbrace{-3}^{(2,2)}
       \enskip 0
       \ldots
          \enskip 0
    \enskip \overbrace{1}^{(2,4)}
       \enskip 0
       \ldots
          \enskip 0
    \enskip \overbrace{1}^{(4,2)}
       \enskip 0
       \ldots
          \enskip 0                  
   ].
$$

It is worth mentioning that the lengths of multivariate polynomials' coefficient vectors rapidly increase as $n$ and $d$ are large.
This makes the size of the present problem rapidly large as well.

\begin{exam}\label{exam:2}
\rm 
Consider the matrix,
given in \cite{Schmudgen09},
$
\bF(x,y) 
= 
\begin{bmatrix}
1 + x^4y^2  & xy \\
xy          & 1 + x^2y^4
\end{bmatrix}.
$
Algorithm 1 produces $b= (1+x^4y^2)^2$ and
\begin{align*}
\bX_{\pm} &=
\begin{bmatrix}
1+x^4y^2  & 0\\
\pm xy    & 1+x^4y^2
\end{bmatrix}, \\
\bD &=
\begin{bmatrix}
(1+x^4y^2)^3  & 0 \\
0             & (1+x^4y^2)(1-x^2y^2+ x^2y^4 + x^4y^2 + x^6y^6 )
\end{bmatrix},
\end{align*}
where the coefficient vectors are ``full-length''.
For example, for
$b= (1+x^4y^2)^2 =  1 + 2x^4y^2 +x^8y^4$ above has degree 12,
and \textsf{OCTAVE} 
produces its coefficient vector 
of length $|\Omega_{2,12}|= 91$
with most zero entries except for the corresponding coefficients of $x^0y^0, x^4y^2, x^8y^4$ are $1,2, 1,$ respectively. 
\end{exam}

\section{Rank matrix minimization model (\ref{eq:Frmp}) }
\label{sec:RankMod}

\subsection{Algorithm}
As mentioned earlier,
the problem (\ref{eq:Frmp}) is solved by 
finding a matrix $X \in  \R^{m\times p}$ step by step for 
$\rk(X)= 1, 2, \ldots,\min\{m,p\}$ such that 
$\phi(X) = u.$  
The search of $X$ believes in
the gLM-method \cite{r665, cot}.

The Levenberg-Marquardt method \cite{r653} is a famous method that is used to solve the least squares problems.
For convenience to the readers, we summarize this method as follows.
A ``real data'' least square problem 
minimizes a real function $f(x),$ $x\in \R^n,$ 
where 
$$
f(x) = \| F(x)\|_2^2 ,  \quad 
F(x) = \left[ F_1(x)\enskip \ldots  \enskip F_l(x) \right]^\mathrm{T} \in \R^l, \quad \forall x\in \R^n,
$$
and $F_j$'s are continuously differentiable.
According to \cite{r653},  
starting with an initial point $x_0 \in \R^n,$
the method finds a sequence of points $\{x_k\} \subset \R^n$ that converges to a minimizer of $f.$
At each step $k,$ the next point $x_{k+1}$ is determined by applying 
the first-order
approximation of $f$ over an appropriately closed ``hyperellipsoid'' with center $x_k.$ 
More precisely,
one first approximates  $f(x_k+p)$ by its first order approximation
\begin{equation}\label{eq:LMstep}
F(x_k+p) \simeq F(x_k) + \jac F(x_k)p, \quad p\in \R^n.
\end{equation}
Then, one minimizes the approximating function 
$F(x_k) + \jac F(x_k)p$ over  a hyperellipsoid 
$E_k= \{p=(p_1, \ldots, p_n):\ \sum_{i=1}^n (d_i^{(k)})^2 p_i^2 \leq \Delta_k\}$
to search 
a descent direction  $p_*$ (a minimizer of the approximating function).
Either the step bound $\Delta_k$ or 
the scaling parameters $d_i^{(k)}$ will be updated after each step $k,$
depending upon the difference between $f(x_k+p_*)$ and $f(x_k).$
The next point $x_{k+1}$ will then be decided to be $x_k+p_*$ or exactly equal to $x_k.$
Roughly speaking, 
this method combines the advantages of the two well-known optimization methods: Gradient-descent and Gauss-Newton ones.
This setting has already been implemented in several technology environments, for example in \textsc{Matlab} by calling the 
``\texttt{lsqnonlin}'' or ``\texttt{fsolve}'' function.

In this situation,
we consider the least squares problem 
with respect to
the function $F: \R^{m\times p}  \rightarrow  \R^l,$
for appropriate integer number $p,$ whose coordinate functions are defined as
\begin{equation}\label{eq_genLSP}
F_i(X) = \phi_i(X) -u_i,  \quad \forall i =1,\ldots , l, 
\quad \forall X\in \R^{m\times p}.
\end{equation}
As mentioned in \eqref{eq:LMstep},
the Jacobian matrix of the input function $F$ is needed:
 $$
 \jac F 
 =
\begin{bmatrix}
\frac{\partial F_{1}(X)}{\partial x_{11}} & \frac{\partial F_{1}(X)}{\partial x_{21}} & \ldots & \frac{\partial F_{1}(X)}{\partial x_{mp}}\\
\frac{\partial F_{2}(X)}{\partial x_{11}} & \frac{\partial F_{2}(X)}{\partial x_{21}} & \ldots & \frac{\partial F_{2}(X)}{\partial x_{mp}}\\
				\vdots								 & 						\vdots						  & \ddots & \vdots\\
\frac{\partial F_{l}(X)}{\partial x_{11}} & \frac{\partial F_{l}(X)}{\partial x_{21}} & \ldots & \frac{\partial F_{l}(X)}{\partial x_{mp}}\\
\end{bmatrix}
\in \R^{l\times mp}, 
 $$ 
where 
$
\dfrac{\partial F_k}{\partial x_{ij}}
$
denotes the partial derivative of $F_k$ with respect to $x_{ij}.$
It is mentioned that 
$
\dfrac{\partial F_k}{\partial x_{ij}}
= \dfrac{\partial \phi_k}{\partial x_{ij}}.
$

Two tolerances (small enough) below are need 
at each Levenberg-Marquardt iteration:
\begin{enumerate} [$\bullet$]
\item 
	the step tolerance $\hat{\tau}:$   at the LM step $k$th, we rely on that a numerical solution $X_{[k]}$ to system (\ref{eq_genLSP}) exists if
	\begin{equation} \label{e_tol_iter} 
	\left| \frac{\|F(X_{[k]})\|_2^2-\|F(X_{[k-1]})\|_2^2 }{\| F(X_{[k-1]}) \|_2^2} \right|  \leq  \hat{\tau};
	\end{equation}
\item
	the residual tolerance $\tau:$ we rely on that the least squares problem has a numerical solution if 
	$\| F(X)\|_2< \tau.$
\end{enumerate}

\noindent
\rule[0ex]{\linewidth}{1.5pt}
\noindent \textbf{Algorithm 2}.  
Solving the problem \eqref{eq:Frmp}.
 \\ 
\rule[0ex]{\linewidth}{1.5pt}
\noindent
\begin{tabular}{ l  p{0.82\textwidth}}
{\textsc{Input}}:   & Vector $u=(u_1,\ldots, u_l) \in \R^l$ and function $\phi.$ 
 \\	
{\textsc{Output}}: & a solution $X\in \R^{m\times p}$  to (\ref{eq:Frmp}) .
 \\
& \\
\ \ \textit{Step 1}.			&	Set $r=1.$
\\
\ \ \textit{Step 2}.			&
	 Solve the system $\phi(X) = u,$ 
	 by applying the gLM-method to the function defined as in (\ref{eq_genLSP}). 
	 Let $X_{[k]}$ be a numerical solution determined by (\ref{e_tol_iter}) with respect to the tolerance $\hat{\tau}$ at some LM-iteration $k.$ Compute $F(X_{[k]}).$ 
\\
\\
\ \ Step 3.   &
	 If $\|F(X_{[k]})\|_2< \tau$ 
	 then $X = X_{[k]}$ is a numerical solution and stop.
	\newline
	Else, 
	\newline
			\mbox{\quad }  3.1. Set $r=r+1.$
	\newline  
         \mbox{\quad }  3.2. Go to Step 2.         
\end{tabular}
\vskip0mm
\noindent
\rule[0ex]{\linewidth}{1.5pt}

\subsection{Affine rank minimization problem over psd-scalar matrices}

\subsubsection{Reformulation}
 \label{sec:psdMatReform}

As mentioned earlier,
the coefficients of a sum of squares scalar-polynomial 
linearly depend on its Gram matrix's entries.
Finding a low rank Gram matrix of a sos scalar-polynomial  
hence 
leads to
an affine rank minimization problem over positive semidefinite matrices:
\begin{equation}\label{eq:Asrmp}
\min \left\{\rk(X)|\ X\in \Sy_+^m\R, \quad \ell(X) = b  \right\},
\end{equation}
where 
$\ell: \Sy^m\R \rightarrow \R^l$
is a linear map and 
$b\in \R^l$ is given.
It is clear that the above problem is not in form \eqref{eq:Frmp}.
It is well-known that any psd-matrix $X$ can be defined by its Cholesky factor $Y \in \Sy^m\R:$
$X= YY^\mathrm{T}.$
Therefore,
by expressing the linear map 
$\ell$
via $l$ matrices
$A_i \in \Sy^m\R:$
$$
\ell(X ) = [\tr(A_1 ^\mathrm{T} X) \quad  \ldots \quad \tr(A_l^\mathrm{T} X)]^\mathrm{T}, \quad 
\forall X\in \Sy^m\R, 
$$
we can define 
an objective function 
$F: \Sy^m\R \rightarrow \R$
that can 
apply LM-method \cite{r653, r665, cot} as:
$$
F_i(X) = \tr(A_i^\mathrm{T} YY^\mathrm{T})- b_i, \quad i=1, \ldots , l.
$$
Moreover,
if $X\in \Sy_+^m\R$ is needed of rank $r$ then so does
$Y\in \R^{m\times r}.$
The
problem (\ref{eq:Asrmp})
can be cast as
\begin{equation}\label{eq:Qsrmp} 
\begin{array}{llll}
\mbox{minimize} & \rk(Y) \\
\mbox{subject to}& \\
								&Y\in \R^{n\times r},\\
								& [\tr(A_1^\mathrm{T} YY^\mathrm{T})\quad\ldots \quad \tr(A_l^\mathrm{T} YY^\mathrm{T})]^\mathrm{T}=\ell(YY^\mathrm{T}) = b
\end{array}
\end{equation}
which has the form \eqref{eq:Frmp}.
In this situation,
the Jacobian  matrix of $F$ 
can be directly computed
as follows
$$
\jac (F) 
= \frac{\partial F}{\partial Y} 
= \begin{bmatrix}
	\frac{\partial F_1}{\partial Y} \\ 
	\vdots \\
	\frac{\partial F_l}{\partial Y}  
\end{bmatrix}
\in \R^{l\times nr}, 
$$
where
\begin{align*}
\frac{\partial F_i}{\partial Y} 
&=  \frac{\partial \tr(A_i^\mathrm{T} Y Y^\mathrm{T} )}{\partial Y}
= 2[\mathrm{vec}(A_iY)]^\mathrm{T}. 
\end{align*}
Here ``$\mathrm{vec}(X)$''
means the column vector that is 
built by stacking the X's columns as usual.

\subsubsection{Numerical tests} \label{sec:NumTestPosMat}

It is very well-known 
from the theoretical point of view that
the rank matrix minimization problem,
say \textit{RM-problem}, 
is NP-hard.
One reason 
might be the nonconvexity of the rank function,  
so that 
there has not been any method directly solve this one in literature.
A good way for solving the RM-problem  over positive semidefinite matrices  is to 
relax the rank function into
nuclear norm function (see, eg., in \cite{RFP10,MP97}).
The resulting problem
is then a semidefinite program \cite{p878}
and can be efficiently solved by SDP solvers. 
Very recently,
Huang and Wolkowicz \cite{Huang2017}
have proposed a method, 
that combines a facial reduction and low rank structure of semidefinite embedding,
to solve a matrix completion problem.
Another method for solving the NNM-problem over positive semidefinite matrices 
was proposed in \cite{Ma2011} by using 
modified
fixed point continuation method. 
Unfortunately,
resulting matrices given by SDP solver are usually full rank.

The experiments in this section are also implemented in 
\textsf{OCTAVE},
where some codes are 
translated 
from which of
COT \cite{cot}  in \textsc{Matlab}.
The input matrices $A_1, \ldots, A_k$ and $b$ 
have entries 
randomly chosen in (0,1).
Below we see that the resulting matrices 
given by gLM-method are low-rank.

\begin{table}[h]
\centerline{
\begin{tabular}{l 	l  |   c |c |  c| c c}
\\
 $n$    	&  $k$	& $\rk X$	 			&   $\#$ iter.	 err.  			&  backward err.          & time (seconds)
\\ \hline \hline
100 	& 50   & 2 	& 12		& 4.0e-15	& 0.3 \\
100 	& 100  & 2 	& 12	 	& 2.9e-15	& 0.54	\\
200 	& 200  & 2 	& 13		& 3.7e-15	& 2.91 \\
500 	& 200  & 2 	& 15		& 4.6e-15	& 37.1 \\
500 	& 400  & 2 	& 15		& 3.3e-14	& 67.7 \\
800 	& 500  & 2 	& 15		& 8.0e-14	& 309.9 \\
\end{tabular}
}
\caption{Numerical tests for the problem (\ref{eq:Qsrmp}) by applying the gLM-method implemented in \textsf{OCTAVE}.
} \label{table_srmp3} 
\end{table}

\section{Sosrf-based 
representations for some psd-polynomial matrices}
\label{sec:sosrf-based}

\subsection{Sosrf-based representations} \label{sec:sosrf} 
In this section 
we present and prove 
sosrf-based and/or sos-based decompositions of a psd-polynomial matrix defined on the sets: 
$\R$ and its special subset, $\R^n,$
and strips $[a,b] \times \R.$
The proofs are
based on the Schm\"{u}dgen's representation described in Proposition \ref{prop:Schmud} 
and 
Artin's Theorem 
\cite{Artin27} 
solving Hilbert's 17th problem 
stating the existence of
sosrf-representations of nonnegative scalar polynomials on $\R^n.$

The following result is a direct corollary of Proposition \ref{prop:Schmud},
its proof was presented in \cite{Schmudgen09} but  we would rewritten here 
that makes the readers more easily understand our proofs below.

\begin{prop}{\rm \cite[Proposition 11]{Schmudgen09}} \label{prop:coroSchmud}
For each symmetric (polynomial) matrix $\bF \in \Sy^m(\R[x]),$
the following statements are equivalent:
\begin{enumerate}[(i)]
\item[\rm (i)]
	$\mathbf{F} \succeq 0$ (on $\R^n$).
\item[\rm (ii)]
	$b^2\mathbf{F}$ are sum of Hermitian squares	 for some polynomial $b,$ that is 
	\begin{equation}\label{eq:sohs1}
		b^2 \mathbf{F} = \sum_{i=1}^r \mathbf{A}_i^\mathrm{T} \mathbf{A}_i,
	\end{equation}
	where $\mathbf{A}_i$ are polynomial matrices.
\end{enumerate}
\end{prop}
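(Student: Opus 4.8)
The plan is to prove the equivalence $(i)\Leftrightarrow(ii)$ by leaning almost entirely on the diagonalization already supplied by Proposition \ref{prop:Schmud}. The implication $(ii)\Rightarrow(i)$ is the easy direction and I would dispatch it first: if $b^2\mathbf{F}=\sum_{i=1}^r\mathbf{A}_i^\mathrm{T}\mathbf{A}_i$, then for every $u\in\R^n$ and every point $x$ one has $b(x)^2\,u^\mathrm{T}\mathbf{F}(x)u=\sum_{i=1}^r\|\mathbf{A}_i(x)u\|_2^2\geq 0$. At points where $b(x)\neq 0$ this forces $u^\mathrm{T}\mathbf{F}(x)u\geq 0$, and since such points are dense (a nonzero polynomial $b$ vanishes only on a lower-dimensional set) and $x\mapsto u^\mathrm{T}\mathbf{F}(x)u$ is continuous, the inequality extends to all of $\R^n$. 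Hence $\mathbf{F}\succeq 0$.

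For the substantive direction $(i)\Rightarrow(ii)$ I would invoke Proposition \ref{prop:Schmud} to obtain polynomial matrices $\bX_+,\bX_-$ and polynomials $b,d_1,\dots,d_m$ with
\begin{equation*}
b^2\mathbf{F}=\bX_+\,\diag(d_1,\dots,d_m)\,\bX_+^\mathrm{T},\qquad \diag(d_1,\dots,d_m)=\bX_-\,\mathbf{F}\,\bX_-^\mathrm{T}.
\end{equation*}
The key observation is that $\mathbf{F}\succeq 0$ transfers to the diagonal entries: from $\diag(d_1,\dots,d_m)=\bX_-\mathbf{F}\bX_-^\mathrm{T}$, evaluating the $j$-th diagonal entry at any $x$ gives $d_j(x)=e_j^\mathrm{T}\bX_-(x)\mathbf{F}(x)\bX_-(x)^\mathrm{T}e_j=w^\mathrm{T}\mathbf{F}(x)w\geq 0$ where $w=\bX_-(x)^\mathrm{T}e_j$. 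Thus each $d_j$ is a nonnegative scalar polynomial on $\R^n$. By Artin's theorem each such $d_j$ admits a sum-of-squares-of-rational-functions representation, i.e.\ there are polynomials $p_j$ with $p_j^2 d_j=\sum_k q_{jk}^2$ a sum of squares of polynomials. Clearing a common denominator $c$ across all $j$ (take $c=\prod_j p_j$ or a suitable power), one writes $c^2\,\diag(d_1,\dots,d_m)=\sum_i \mathbf{v}_i\mathbf{v}_i^\mathrm{T}$ as a sum of Hermitian squares of polynomial matrices.

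Substituting this back into $b^2\mathbf{F}=\bX_+\diag(d_1,\dots,d_m)\bX_+^\mathrm{T}$ and multiplying by $c^2$ yields
\begin{equation*}
(cb)^2\mathbf{F}=\bX_+\Big(\sum_i \mathbf{v}_i\mathbf{v}_i^\mathrm{T}\Big)\bX_+^\mathrm{T}=\sum_i(\bX_+\mathbf{v}_i)(\bX_+\mathbf{v}_i)^\mathrm{T},
\end{equation*}
which is the desired representation $(cb)^2\mathbf{F}=\sum_i\mathbf{A}_i^\mathrm{T}\mathbf{A}_i$ after transposing to match the convention in \eqref{eq:sohs1}. The main obstacle, and the only place requiring genuine care, is the passage from the scalar Artin representations of the individual $d_j$ to a single sum-of-Hermitian-squares for the whole diagonal matrix with one common denominator: one must verify that the denominators can be aligned simultaneously and that the resulting $\mathbf{A}_i$ are genuinely polynomial (not merely rational) matrices, so that the final identity is an equality of polynomial matrices rather than of rational ones. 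Everything else is bookkeeping on the relations already established in Proposition \ref{prop:Schmud}.
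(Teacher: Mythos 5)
Your proposal is correct and follows essentially the same route as the paper: the easy direction by evaluating quadratic forms (you are in fact more careful, handling the zero set of $b$ by density and continuity), and the substantive direction by combining the Schm\"{u}dgen diagonalization of Proposition \ref{prop:Schmud} with Artin's theorem applied to each diagonal entry $d_j$ and a common denominator $c=\prod_j p_j$, exactly as the paper does via $h_{ij}=g_{ij}\prod_{l\neq i}p_l$. The one small point you make explicit that the paper leaves implicit is that nonnegativity of the $d_j$ follows from $\diag(d_1,\dots,d_m)=\bX_-\bF\bX_-^\mathrm{T}$, which is a worthwhile clarification but not a different argument.
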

\begin{proof}
It is not hard to see $\bF \succeq 0$ on $\R^n$ if $p^2 \bF = \sum_{i=1}^r \mathbf{A}_i^\mathrm{T} \mathbf{A}_i.$

Conversely, we consider two cases of $\bF$ as follows:

(a) if $\bF$ is diagonal and  
$ \bF= \mathbf{D}:=\diag(d_1, \ldots, d_m) \succeq 0$ 
on $\R^n.$
Then,
denote $e_i$ the $i$th unit vector in $\R^n,$
we have
$$
d_i = e_i^\mathrm{T} \bD e_i  \mbox{ is nonnegative on } \R^n \mbox{ for every } i=1, \ldots, m.
$$ 
It thus follows from Artin's Theorem solving Hilbert's 17th problem
that 
$$
p_i^2 d_i = \sum_{j=1}^{r_j} g_{ij}^2,
$$
for some polynomials $p_i, g_{ij}\in \R[x].$
Set
$p = p_1\ldots p_r,$
$h_{ij} = g_{ij}\prod\limits_{j\neq i}{p_j}$
and
$A_j=  \diag(h_{1j}, \ldots, h_{rj})$
we have
$
p^2 \bD = \sum_{i=1}^k \bA_i^\mathrm{T} \bA_i.
$

(b) if $\bF$ is not diagonal then from (\ref{eq:SchmudDec}),
	$b^2 \bF = \bX_+ \bD \bX_+^\mathrm{T}$
	we are done by applying the  representation of $\bD$ above.
\end{proof}

The following result shows that 
the number of sosrf-terms in Proposition \ref{prop:coroSchmud}
can be restricted to 2 if
polynomial matrices defined on $\R.$
\begin{prop}\label{prop_sosrmR}  
Let $\bF\in \Sy^m \R[x],$ $x\in \R.$
Then $\bF\succeq 0$ on $\R$ if and only if there exists 
a polynomial $b \neq 0$ and two polynomial matrices
$\bU,\bV \in \R[x]^{m\times m}$
such that 
$$
b^2 \bF = \bU^\mathrm{T} \bU + \bV^\mathrm{T} \bV.
$$
\end{prop}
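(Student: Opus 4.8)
The plan is to treat the two implications separately, with the forward (necessity) direction resting on Schm\"{u}dgen's diagonalization combined with the classical univariate two-squares theorem recalled in the introduction.

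For the easy direction (sufficiency), I would suppose $b^2 \bF = \bU^\mathrm{T} \bU + \bV^\mathrm{T} \bV$ with $b \neq 0$ and fix an arbitrary $u \in \R^m$. Evaluating at a point $x \in \R$ and pairing with $u$ gives $b(x)^2\, u^\mathrm{T} \bF(x) u = \|\bU(x) u\|_2^2 + \|\bV(x) u\|_2^2 \geq 0$. Since $b$ is a nonzero polynomial it vanishes only on a finite set, so $u^\mathrm{T} \bF(x) u \geq 0$ off that finite set; by continuity of $x \mapsto u^\mathrm{T} \bF(x) u$ the inequality extends to all of $\R$, and as $u$ is arbitrary we conclude $\bF \succeq 0$ on $\R$.

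For the necessity direction, I would first invoke Proposition \ref{prop:Schmud} to obtain polynomial matrices $\bX_+, \bX_-$ and a nonzero polynomial $b$ with $b^2 \bF = \bX_+ \bD \bX_+^\mathrm{T}$, where $\bD = \diag(d_1, \ldots, d_m)$. Because $\bF \succeq 0$ on $\R$, Corollary \ref{coro:kgSets} guarantees that each diagonal polynomial $d_j$ is nonnegative on all of $\R$. This is where univariateness is decisive: every univariate real polynomial nonnegative on $\R$ is a sum of exactly two squares, so I may write $d_j = p_j^2 + q_j^2$ with $p_j, q_j \in \R[x]$ for each $j$. Assembling the diagonal polynomial matrices $\mathbf{P} = \diag(p_1, \ldots, p_m)$ and $\mathbf{Q} = \diag(q_1, \ldots, q_m)$, diagonality makes $\bD = \mathbf{P}^\mathrm{T}\mathbf{P} + \mathbf{Q}^\mathrm{T}\mathbf{Q}$ immediate (since $\mathbf{P}^\mathrm{T}\mathbf{P} = \diag(p_j^2)$ and likewise for $\mathbf{Q}$). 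Substituting into the Schm\"{u}dgen relation yields
$$
b^2 \bF = \bX_+ (\mathbf{P}^\mathrm{T}\mathbf{P} + \mathbf{Q}^\mathrm{T}\mathbf{Q}) \bX_+^\mathrm{T} = (\mathbf{P}\bX_+^\mathrm{T})^\mathrm{T} (\mathbf{P}\bX_+^\mathrm{T}) + (\mathbf{Q}\bX_+^\mathrm{T})^\mathrm{T} (\mathbf{Q}\bX_+^\mathrm{T}),
$$
so setting $\bU = \mathbf{P}\bX_+^\mathrm{T}$ and $\bV = \mathbf{Q}\bX_+^\mathrm{T}$, both genuine polynomial matrices, completes the decomposition.

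The only real subtlety, and the reason the bound of two terms holds here but not in Proposition \ref{prop:coroSchmud}, is precisely the univariate two-squares theorem: in several variables Artin's theorem would only yield a sum of many squares, and the matrix decomposition would inherit that larger number of terms. A secondary point I would confirm is that the $b$ produced by Schm\"{u}dgen's procedure is genuinely nonzero, as the statement requires; this holds because $b$ is a product of the pivots $\alpha_i^2$, which are nonzero once the degenerate case is handled by the orthogonal-transformation fix mentioned after Proposition \ref{prop_alg}. The rest of the argument is purely mechanical bookkeeping once these ingredients are in place.
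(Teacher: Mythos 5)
Your proof is correct and follows essentially the same route as the paper's: Schm\"{u}dgen's diagonalization (Proposition \ref{prop:Schmud}) reduces to the diagonal case, the univariate two-squares theorem handles each diagonal entry, and the factors are pushed through $\bX_+$. The only difference is cosmetic --- your sufficiency argument is actually slightly more careful than the paper's, since you explicitly dispose of the finitely many zeros of $b$ by continuity before dividing by $b(x)^2$.
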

\begin{proof}
We first prove the proposition for $\bF$ is diagonal,
$$\bF= \bD:= \diag(d_1, \ldots, d_m)\succeq 0,$$ 
where $d_i$'s are non-negative polynomials.
One notes that 
each univariate polynomial which is non-negative on the real line can be expressed as a sum of at  most two squares.
So
$d_i = a_i^2 + b_i^2,\ \forall i=1, \ldots, m$
where 
$a_i, b_i\in \R[x].$
We then have
\begin{equation}\label{eq:sosmp_real}
\bD = \bA^\mathrm{T}\bA + \bB^\mathrm{T} \bB, \quad
\bA = \diag(a_1, \ldots, a_m),\
\bB = \diag(b_1, \ldots, b_m).
\end{equation} 

The opposite direction is not hard to obtain  by
directly computing the summation of two diagonal matrices 
$\bA^\mathrm{T} \bA+ \bB^\mathrm{T} \bB.$

Now, suppose $\bF\in \Sy^m \R[x]$ is positive semidefinite on $\R^n.$
It follows from Proposition \ref{prop:Schmud}, one has 
$$
\bX_+ \bX_- = \bX_- \bX_+ = b I_m, \quad
b^2 \bF = \bX_+ \bD  \bX_+^\mathrm{T}, \quad
\bD = \bX_- \bF \bX_-^\mathrm{T},
$$
for some 
$b\in \R[x],$
$\bX_{-}, \bX_+ \in \R[x]^{m\times m} $
and $\bD$ is diagonal.
Applying the above proof to the matrix $\bD$ one obtains  
$$
b^2 \bF = (\bA \bX_+^\mathrm{T})^\mathrm{T} (\bA \bX_+^\mathrm{T}) + (\bB \bX_+^\mathrm{T})^\mathrm{T}(\bB \bX_+^\mathrm{T}),
$$
where $\bA, \bB$ are defined as in (\ref{eq:sosmp_real}),
and we are done.

Conversely,
if $b^2 \bF= \bU^\mathrm{T} \bU+ \bV^\mathrm{T} \bV$ then  
for each $x\in \R$ and 
for all $y\in \R^m$ one has
\begin{align*}
b(x)^2[y^\mathrm{T} \bF(x) y]
&= y^\mathrm{T} \left[(b(x)^2 \bF(x) \right] y 
= y^\mathrm{T}\left[ \bU(x)^\mathrm{T} \bU(x)+ \bV(x)^\mathrm{T} \bV(x) \right] y 
\geq 0.
\end{align*}
This yields 
$y^\mathrm{T} \bF(x) y \geq 0$ for all $y\in \R^m$
and hence
$\bF(x) \succeq 0$ for all $x\in \R.$
\end{proof}

On the interval $[0, +\infty)$ we have the following.
%
%

\begin{prop}\label{prop_GenInf}
Let $\bF\in \Sy^m\R[x],$ $x\in \R.$
Then $\bF\succeq 0$ on $[0, +\infty)$ if and only if there exists 
a (scalar-coefficient) polynomial $b \neq 0$ and two polynomial matrices
$\bU, \bV \in \R[x]^{m\times m}$
such that 
$$
b(x)^2 \bF(x) = \bU(x)^\mathrm{T} \bU(x) + \bV(x)^\mathrm{T} \bV(x) x.
$$
\end{prop}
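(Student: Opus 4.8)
The plan is to mirror the structure of the proof of Proposition~\ref{prop_sosrmR}, reducing the matrix statement to a scalar statement about nonnegative univariate polynomials and then lifting the scalar decomposition back through Schm\"{u}dgen's diagonalization. The key scalar fact I would invoke is the classical characterization of univariate polynomials nonnegative on $[0,+\infty)$: a polynomial $d\in\R[x]$ satisfies $d\geq 0$ on $[0,+\infty)$ if and only if it can be written as $d(x)=p(x)^2+x\,q(x)^2$ for some $p,q\in\R[x]$ (this is exactly the ``sos-based representation'' on $[0,\infty)$ recalled in the introduction). The core of the argument is therefore the diagonal case, after which the full-matrix case follows by congruence.

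First I would treat the diagonal case $\bF=\bD:=\diag(d_1,\ldots,d_m)\succeq 0$ on $[0,+\infty)$, so that each $d_i\geq 0$ on $[0,+\infty)$. Applying the scalar representation entrywise gives $d_i(x)=a_i(x)^2+x\,c_i(x)^2$ for polynomials $a_i,c_i\in\R[x]$. Collecting these into $\bA=\diag(a_1,\ldots,a_m)$ and $\bC=\diag(c_1,\ldots,c_m)$, one obtains
\begin{equation*}
\bD = \bA^\mathrm{T}\bA + \bC^\mathrm{T}\bC\,x,
\end{equation*}
which is the desired form with $b=1$ in the diagonal situation. Next, for a general psd $\bF$ on $[0,+\infty)$, I would invoke Proposition~\ref{prop:Schmud} to obtain $b\in\R[x]$, $\bX_\pm\in\R[x]^{m\times m}$, and a diagonal $\bD$ with $b^2\bF=\bX_+\bD\,\bX_+^\mathrm{T}$ and $\bD=\bX_-\bF\bX_-^\mathrm{T}$. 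Here I must check that $\bD\succeq 0$ on $[0,+\infty)$, which follows because $\bD=\bX_-\bF\bX_-^\mathrm{T}$ is a congruence of $\bF$, so $y^\mathrm{T}\bD(x)y=(\bX_-(x)^\mathrm{T}y)^\mathrm{T}\bF(x)(\bX_-(x)^\mathrm{T}y)\geq 0$ for $x\geq 0$; hence the diagonal entries $d_i$ are nonnegative on $[0,+\infty)$ and the entrywise scalar representation applies. Substituting the diagonal decomposition into $b^2\bF=\bX_+\bD\,\bX_+^\mathrm{T}$ yields
\begin{equation*}
b^2\bF = (\bA\bX_+^\mathrm{T})^\mathrm{T}(\bA\bX_+^\mathrm{T}) + (\bC\bX_+^\mathrm{T})^\mathrm{T}(\bC\bX_+^\mathrm{T})\,x,
\end{equation*}
so that setting $\bU=\bA\bX_+^\mathrm{T}$ and $\bV=\bC\bX_+^\mathrm{T}$ gives the required identity $b^2\bF=\bU^\mathrm{T}\bU+\bV^\mathrm{T}\bV x$.

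For the converse I would argue exactly as in Proposition~\ref{prop_sosrmR}: if $b(x)^2\bF(x)=\bU(x)^\mathrm{T}\bU(x)+\bV(x)^\mathrm{T}\bV(x)x$, then for any $x\geq 0$ and any $y\in\R^m$,
\begin{equation*}
b(x)^2\,y^\mathrm{T}\bF(x)y = \|\bU(x)y\|_2^2 + x\,\|\bV(x)y\|_2^2 \geq 0,
\end{equation*}
since both $x\geq 0$ and the two squared norms are nonnegative. At points where $b(x)\neq 0$ this forces $y^\mathrm{T}\bF(x)y\geq 0$; the finitely many real roots of $b$ in $[0,+\infty)$ are handled by continuity of $x\mapsto y^\mathrm{T}\bF(x)y$, giving $\bF(x)\succeq 0$ on all of $[0,+\infty)$.

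I expect the only genuinely non-routine point to be the justification that the scalar $[0,\infty)$-representation $d=p^2+xq^2$ holds for every $d_i$ arising from the diagonalization; everything else is bookkeeping parallel to the $\R$-case. The scalar fact itself is standard, but I would make sure to state it as the precise input (rather than Artin's theorem, which only gives representations with denominators) so that the factor $x$ in the second term is genuinely produced. A minor subtlety worth flagging is that the congruence $\bD=\bX_-\bF\bX_-^\mathrm{T}$ transfers nonnegativity on $[0,+\infty)$ to the diagonal entries without needing $\bX_-$ to be invertible at every point, since nonnegativity of a quadratic form is preserved under arbitrary (not necessarily invertible) congruence.
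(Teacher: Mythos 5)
Your proof is correct and follows essentially the same route as the paper's: reduce to the diagonal case via Schm\"{u}dgen's diagonalization (Proposition~\ref{prop:Schmud}), apply the scalar representation $d_i = a_i^2 + x\,c_i^2$ entrywise, and lift back through $b^2\bF = \bX_+\bD\bX_+^{\mathrm{T}}$, with the converse by evaluating the quadratic form at $x\geq 0$. The two points you flag explicitly --- that $\bD\succeq 0$ on $[0,+\infty)$ follows from the congruence $\bD=\bX_-\bF\bX_-^{\mathrm{T}}$, and that the zeros of $b$ in the converse are handled by continuity --- are details the paper passes over silently, so your version is a slight tightening of the same argument rather than a different one.
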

\begin{proof}
Analogously to the proof of the previous proposition, 
we prove the case of diagonal matrices.
Suppose $\bD= \diag(d_1, \ldots, d_m)\succeq 0,$
that is
$d_i(x) \geq 0$ for all $x\in [0, +\infty).$ 
It is provided by the work in \cite{Brickman62} that
$d_i = a_i^2 + b_i^2x,$
$a_i,b_i\in \R[x],$
for all $ \forall i=1, \ldots, m.$
So
$$
\bD = \bA^\mathrm{T} \bA + \bB^\mathrm{T} \bB x, 
\mbox{ where }
\bA = \diag(a_1, \ldots, a_m),\
\bB = \diag(b_1, \ldots, b_m).
$$

The ``\textit{only if}'' part is easy to see
by directly computing the multiplication of diagonal matrices.

We now consider an arbitrary
 $\bF\succeq 0$  on $[0,+\infty).$
Proposition \ref{prop:Schmud} leads us to the following decomposition
$$
\bX_+ \bX_- = \bX_- \bX_+ = bI_m, \quad
b^2 \bF = \bX_+ \bD \bX_+^\mathrm{T}, \quad
\bD = \bX_- \bF \bX_-^\mathrm{T},
$$
for some 
$b\in \R[x],$
$\bX_{-}, \bX_+ \in \R[x]^{m\times m} $
and $\bD$ is diagonal.
Using the representation of $\bD$ above we obtain that
$$
b^2 \bF = (\bA \bX_+^\mathrm{T})^\mathrm{T} (\bA \bX_+^\mathrm{T}) + (\bB \bX_+^\mathrm{T})^\mathrm{T}(\bB \bX_+^\mathrm{T})x
:= \bU^\mathrm{T} \bU + \bV^\mathrm{T} \bV x.
$$

For the converse direction,
take $x\geq 0$ arbitrary, then 
for every $y\in \R^m$ one has
\begin{align*}
b(x)^2 [y^\mathrm{T} \bF(x) y]
& = y^\mathrm{T} (b^2 \bF) y 
= y^\mathrm{T}(\bU^\mathrm{T} \bU+ \bV^\mathrm{T} \bV x)y \\
&= y^\mathrm{T} \bU^\mathrm{T} \bU y + (y^\mathrm{T} \bV^\mathrm{T} \bV y)x
\geq 0.
\end{align*}
This implies 
$y^\mathrm{T} \bF(x) y \geq 0$ for all $y\in \R^m.$
Thus
$\bF(x) \succeq 0$ on $[0,+\infty).$
\end{proof}

We now consider the polynomial matrices positive semidefinite on $[a,b].$
\begin{prop} \label{prop_DiagAB}
Let $\bD\in \R[x]^{m\times m}$ be a diagonal univariate-polynomial matrix.
Then $\bD\succeq 0$ on $[a,b]$ 
if and only if there are diagonal (polynomial) matrices $\bA_i,$ $i=1, \ldots, 4$ such that 
\begin{align*}
\bD(x) &= \bA_1(x)^\mathrm{T} \bA_1(x) (x-a) + \bA_2(x)^\mathrm{T} \bA_2(x) (b-x) \\
     &\quad + \bA_3(x)^\mathrm{T} \bA(x)  + \bA_4(x)^\mathrm{T} \bA_4(x) (b-x)(x-a).
\end{align*}
\end{prop}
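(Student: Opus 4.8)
The plan is to exploit the diagonal structure to reduce the matrix statement to a scalar one, and then to invoke the classical description of univariate polynomials nonnegative on a compact interval. First I would observe that, since $\bD = \diag(d_1,\dots,d_m)$ is diagonal, $\bD \succeq 0$ on $[a,b]$ holds if and only if every diagonal entry is nonnegative there: indeed $d_j(x) = e_j^\mathrm{T}\bD(x)e_j$, so $\bD(x)\succeq 0$ forces $d_j(x)\ge 0$, while conversely a diagonal matrix with nonnegative diagonal entries is positive semidefinite. Thus the problem decouples across the diagonal, and it suffices to produce, for each $j$, scalar polynomials realizing the $j$-th diagonal slot of the claimed right-hand side.

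Next, for a fixed index $j$ with $d_j\ge 0$ on $[a,b]$, I would apply the classical Markov--Luk\'acs representation of univariate polynomials nonnegative on an interval (the interval analogue of the facts $d = a^2+b^2$ on $\R$ and $d = a^2 + xb^2$ on $[0,\infty)$ used earlier via \cite{Brickman62}). This yields a representation using single squares whose shape is dictated by the parity of $\deg d_j$: if $\deg d_j$ is even then $d_j = p_j^2 + (x-a)(b-x)\,q_j^2$, and if $\deg d_j$ is odd then $d_j = (x-a)\,p_j^2 + (b-x)\,q_j^2$, for suitable $p_j,q_j\in\R[x]$. The essential point, and the \emph{main obstacle}, is precisely that in one variable Markov--Luk\'acs furnishes a single square against each generator $1,\ (x-a),\ (b-x),\ (x-a)(b-x)$ rather than a sum of squares; this is what makes a purely diagonal representation possible, since each $\bA_i^\mathrm{T}\bA_i$ contributes exactly one square per diagonal slot.

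Finally I would assemble the four diagonal matrices. For each $j$, when $\deg d_j$ is even I place $p_j$ into the $(j,j)$ slot of $\bA_3$ and $q_j$ into that of $\bA_4$ (setting the $(j,j)$ entries of $\bA_1,\bA_2$ to zero); when $\deg d_j$ is odd I place $p_j$ into $\bA_1$ and $q_j$ into $\bA_2$ (setting the $(j,j)$ entries of $\bA_3,\bA_4$ to zero). Writing each $\bA_i=\diag(\cdots)$, the product $\bA_i^\mathrm{T}\bA_i$ is diagonal with squared entries, and by construction the $j$-th diagonal entry of $\bA_1^\mathrm{T}\bA_1(x-a)+\bA_2^\mathrm{T}\bA_2(b-x)+\bA_3^\mathrm{T}\bA_3+\bA_4^\mathrm{T}\bA_4(b-x)(x-a)$ equals $d_j$, so the whole sum equals $\bD$. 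Different diagonal entries may have different parities, which is exactly why all four matrices are genuinely needed in general.

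The converse direction is immediate and I would dispose of it at the end: for any $x\in[a,b]$ the scalars $x-a$, $b-x$, $1$, and $(b-x)(x-a)$ are all nonnegative, and each $\bA_i^\mathrm{T}\bA_i$ is positive semidefinite, so for every $y\in\R^m$ the quantity $y^\mathrm{T}\bD(x)y$ is a nonnegative combination of nonnegative terms, whence $\bD(x)\succeq 0$ on $[a,b]$.
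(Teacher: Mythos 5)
Your proposal is correct and follows essentially the same route as the paper: the paper (in the proof of Proposition \ref{prop_GenAB}, where the diagonal case is handled) also reduces to the scalar Markov--Luk\'acs representation on $[a,b]$ and splits the diagonal entries by the parity of their degrees, assigning the odd-degree entries to the $(x-a)$, $(b-x)$ terms and the even-degree entries to the $1$, $(x-a)(b-x)$ terms. Your per-entry assembly of the four diagonal matrices is just a slightly more explicit rendering of the paper's decomposition $\bD=\bD_1+\bD_2$, and your converse argument matches the paper's as well.
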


Similarly to the two previous cases we are provided the following result.
Its proof utilises the same techniques as in the four propositions above.
\begin{prop}\label{prop_GenAB}
Let $\bF\in \Sy^m \R[x].$
Then $\bF\succeq 0$ on $[a,b]$ if and only if there exists 
a (scalar-coefficient) polynomial $b \neq 0$ and for polynomial matrices
$\bU_1, \bU_2, \bU_3, \bU_4 \in \R[x]^{m\times m}$
such that 
\begin{align*}
b^2(x) \bF(x) &= \bU_1(x)^\mathrm{T} \bU_1(x) (x-a) + \bU_2(x)^\mathrm{T} \bU_2(x) (b-x) \\
  				   &\quad + \bU_3(x)^\mathrm{T} \bU(x)  + \bU_4(x)^\mathrm{T} \bU_4(x) (b-x)(x-a).
\end{align*}
\end{prop}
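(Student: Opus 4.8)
The plan is to follow the two-stage template already used for Propositions \ref{prop_sosrmR} and \ref{prop_GenInf}: first reduce to the diagonal matrix $\bD$ produced by Schm\"{u}dgen's decomposition, where the scalar representation on $[a,b]$ is available through Proposition \ref{prop_DiagAB}, and then transport that representation back to $\bF$ by congruence with $\bX_+$. The converse is handled, as before, by a direct pointwise evaluation of the quadratic form.

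For the forward direction I would start from Proposition \ref{prop:Schmud}, which furnishes a nonzero polynomial $b$ and polynomial matrices $\bX_+,\bX_-$ with $\bX_+\bX_-=\bX_-\bX_+=bI_m$, $b^2\bF=\bX_+\bD\bX_+^\mathrm{T}$ and $\bD=\bX_-\bF\bX_-^\mathrm{T}$, where $\bD=\diag(d_1,\ldots,d_m)$. The key observation is that congruence preserves positive semidefiniteness pointwise: since $\bD(x)=\bX_-(x)\bF(x)\bX_-(x)^\mathrm{T}$ and $\bF(x)\succeq 0$ for every $x\in[a,b]$, we get $\bD(x)\succeq 0$ on $[a,b]$, hence each $d_i\geq 0$ on $[a,b]$. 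Proposition \ref{prop_DiagAB} then supplies diagonal matrices $\bA_1,\ldots,\bA_4$ with
\[
\bD=\bA_1^\mathrm{T}\bA_1\,(x-a)+\bA_2^\mathrm{T}\bA_2\,(b-x)+\bA_3^\mathrm{T}\bA_3+\bA_4^\mathrm{T}\bA_4\,(b-x)(x-a).
\]
Substituting into $b^2\bF=\bX_+\bD\bX_+^\mathrm{T}$ and distributing the conjugation $\bX_+(\cdot)\bX_+^\mathrm{T}$ across the four summands yields the desired shape after setting $\bU_i:=\bA_i\bX_+^\mathrm{T}$, because $\bX_+\bA_i^\mathrm{T}\bA_i\bX_+^\mathrm{T}=(\bA_i\bX_+^\mathrm{T})^\mathrm{T}(\bA_i\bX_+^\mathrm{T})$ and each scalar weight $(x-a),(b-x),1,(b-x)(x-a)$ commutes with the matrix factors.

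For the converse I would fix $x\in[a,b]$ and an arbitrary $y\in\R^m$ and evaluate
\[
b(x)^2\,y^\mathrm{T}\bF(x)y=\|\bU_1(x)y\|_2^2(x-a)+\|\bU_2(x)y\|_2^2(b-x)+\|\bU_3(x)y\|_2^2+\|\bU_4(x)y\|_2^2(b-x)(x-a).
\]
On $[a,b]$ each weight $(x-a)$, $(b-x)$ and $(b-x)(x-a)$ is nonnegative, so the right-hand side is nonnegative; dividing by $b(x)^2$ at the points where $b(x)\neq 0$ gives $y^\mathrm{T}\bF(x)y\geq 0$ there, and since the nonzero polynomial $b$ has only finitely many zeros, the inequality extends to all of $[a,b]$ by continuity, so $\bF\succeq 0$ on $[a,b]$.

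The substantive content lies not in this proposition but in Proposition \ref{prop_DiagAB}, whose proof rests on the classical univariate Positivstellensatz on a compact interval, namely that a polynomial nonnegative on $[a,b]$ admits a representation $\sigma_0+\sigma_1(x-a)+\sigma_2(b-x)+\sigma_3(b-x)(x-a)$ with each $\sigma_j$ a sum of (at most two) squares. Granting that, the only delicate point above is the pointwise congruence $\bD(x)\succeq 0$ on $[a,b]$: it is essential that $\bX_-$ has polynomial, hence everywhere-defined, entries so the congruence holds at \emph{every} $x\in[a,b]$, including the finitely many zeros of $b$, which is exactly what keeps the diagonal reduction and its back-substitution valid on the whole interval.
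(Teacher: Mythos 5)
Your proof is correct and follows essentially the same route as the paper: reduce to the diagonal matrix $\bD$ via Schm\"{u}dgen's decomposition, apply the classical interval representation of Proposition \ref{prop_DiagAB} to the diagonal entries, and transport the four-term decomposition back through the congruence $b^2\bF=\bX_+\bD\bX_+^\mathrm{T}$ with $\bU_i=\bA_i\bX_+^\mathrm{T}$. If anything, your converse (dividing by $b(x)^2$ away from the finitely many zeros of $b$ and extending by continuity) is spelled out more carefully than the paper's, which only remarks that it is derived analogously to the previous cases.
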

\begin{proof}
It is sufficient to prove for diagonal matrix $\bF.$
It is well known that (see, eg., \cite{Brickman62, r678})
a scalar polynomial $p $ of degree $d$ is nonnegative on $[a,b]$ if and only if there are two polynomials 
$p_1, p_2$ such that
\small{
 $$
 p(x)=
 \begin{cases} p_1(x)^2 +(x-a)(b-x)p_2(x)^2, \deg p_2+1=\deg p_1=d_1, & \mbox{if } d=2d_1,\\
                                                                              &                        \\
                    (x-a)p_1(x)^2+(b-x)p_2(x)^2, \deg p_1=\deg p_2=d_1,  & \mbox{if }  d=2d_1+1.
  \end{cases}
 $$
}

\noindent
By decomposing 
$\bD= \bD_1 + \bD_2$ 
where 
$\bD_1$'s diagonal entries
 (resp., $\bD_2$'s) 
are those of $\bD$ with odd (resp., even) degree
as the same position in $\bD,$
and are zeros otherwise.
Then applying the above representation of polynomial nonnegative on $[a,b]$
to diagonal entries of $\bD_1$ and $\bD_2,$
one can find $\bA_1, \bA_2$ with respect to $\bD_1$ and $\bA_3, \bA_4$ with respect to $\bD_2.$

The converse part is derived analogously to the two previous cases.
\end{proof}

On the strips $[a, b] \times \R$ we have the following.
\begin{prop} \label{prop_DiagStr}
Let $\bD\in \R[x]^{m\times m}$ be a diagonal matrix defined on the strip $[0,1]\times \R.$
Then $\bD\succeq 0$ on the strip $[0,1] \times \R$ 
if and only if there exist diagonal (polynomial) matrices $\bA_t,\bB_t,$ $t=1, \ldots, k$ such that 
$$
\bD(x) 
= \sum_{t=1}^k  \bA_t(x)^\mathrm{T} \bA_t(x) + x_1(1-x_1) \sum_{t=1}^k \bB_t(x)^\mathrm{T} \bB_t(x) .
$$
\end{prop}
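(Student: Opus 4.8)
The plan is to reduce the matrix statement to its diagonal entries and then invoke the scalar Positivstellensatz for the strip, exactly in the spirit of the diagonal cases treated in Propositions \ref{prop_sosrmR}, \ref{prop_GenInf} and \ref{prop_DiagAB}. Since $\bD = \diag(d_1,\ldots,d_m)$ is diagonal, for any $x=(x_1,x_2)$ in the strip and any $u\in\R^m$ one has $u^\mathrm{T}\bD(x)u = \sum_{i=1}^m d_i(x)\,u_i^2$. Hence $\bD\succeq 0$ on $[0,1]\times\R$ holds if and only if each diagonal entry $d_i$ is nonnegative on $[0,1]\times\R$, and the whole problem becomes a scalar one applied entrywise.

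Next I would apply the scalar strip theorem: a bivariate polynomial nonnegative on $[0,1]\times\R$ can be written as $\sigma_0 + x_1(1-x_1)\sigma_1$, where $\sigma_0$ and $\sigma_1$ are sums of squares of polynomials (note there is no denominator here, so this is a genuine sos-based, not merely sosrf-based, representation). Applying this to each $d_i$ gives
$$
d_i = \sum_{t} a_{it}^2 + x_1(1-x_1)\sum_{t} b_{it}^2, \qquad a_{it},\, b_{it}\in\R[x].
$$
Letting $k$ be the largest number of square terms appearing in any of these $2m$ sums of squares and padding the shorter sums with zero terms, I may assume every $d_i$ uses exactly $k$ terms of each type.

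I would then assemble the diagonal matrices $\bA_t = \diag(a_{1t},\ldots,a_{mt})$ and $\bB_t=\diag(b_{1t},\ldots,b_{mt})$ for $t=1,\ldots,k$. A direct computation gives $\sum_{t=1}^k \bA_t^\mathrm{T}\bA_t = \diag\!\big(\sum_t a_{1t}^2,\ldots,\sum_t a_{mt}^2\big)$ and similarly for the $\bB_t$, so that $\sum_t \bA_t^\mathrm{T}\bA_t + x_1(1-x_1)\sum_t \bB_t^\mathrm{T}\bB_t = \diag(d_1,\ldots,d_m)=\bD$, which is the desired representation. For the converse, which is the easy direction as in the earlier propositions, I would observe that on $[0,1]\times\R$ one has $x_1(1-x_1)\geq 0$, while $\bA_t^\mathrm{T}\bA_t \succeq 0$ and $\bB_t^\mathrm{T}\bB_t\succeq 0$ pointwise; thus the sum is a nonnegative combination of positive semidefinite matrices and $\bD(x)\succeq 0$ throughout the strip.

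The main obstacle is the scalar strip theorem itself. Unlike the compact interval $[a,b]$ or the ray $[0,\infty)$ handled via \cite{Brickman62}, the strip is unbounded in the $x_2$-direction, and the existence of a denominator-free representation $\sigma_0+x_1(1-x_1)\sigma_1$ with $\sigma_0,\sigma_1$ sums of squares is a nontrivial result (due to Marshall). Once that scalar ingredient is available, the matrix-level reduction to the diagonal entries and the reassembly into the diagonal matrices $\bA_t,\bB_t$ are entirely routine.
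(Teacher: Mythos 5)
Your proposal is correct and follows essentially the same route as the paper: reduce to the diagonal entries, invoke Marshall's strip theorem to write each $d_i = \sigma_i + x_1(1-x_1)\tau_i$ with $\sigma_i,\tau_i$ sums of squares, and assemble the square terms into the diagonal matrices $\bA_t,\bB_t$ (your explicit zero-padding to a common $k$ is a detail the paper leaves implicit, and your statement of Marshall's result is actually cleaner than the paper's, which misprints it as $\sigma_i^2 + x_1(1-x_1)\tau_i^2$).
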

\begin{proof}
Suppose $\bD= \diag(d_1, \ldots, d_m)\succeq 0$ on the strip $[0,1] \times \R.$
One can find in \cite{Marshall10} that
$$
d_i(x_1, x_2) = \sigma_i(x_1, x_2)^2 +x_1(1-x_1)\ \tau_i(x_1, x_2)^2 ,\ \forall i=1, \ldots, m,
$$
where 
$\sigma_i, \tau_i\in \R[x_1, x_2]$
are sums of squares:
\begin{align*}
\sigma_i &= \sum_{t=1}^k g_{ik}^2, \quad g_{it} \in \R[x_1, x_2],\\
\tau_i &= \sum_{t=1}^k h_{ik}^2, \quad h_{it} \in \R[x_1, x_2]. 
\end{align*}
We then take 
$\bA_t = \diag(g_{1t}, \ldots , g_{kt})$ and  
$\bB_t = \diag(h_{1t}, \ldots , h_{kt})$
so that the desired representation of $\bD$ is obtained.

The converse direction is done in the similar way to the previous proofs.
\end{proof}

\begin{prop}\label{prop_GenStr}
Given $\bF\in \Sy^m(\R[x_1, x_2]).$
Then $\bF\succeq 0$ on $[0,1] \times \R$ 
if and only if there exists
a (scalar-coefficient) polynomial $b \neq 0$ and polynomial matrices
$\bU_t, \bV_t \in \R[x_1, x_2]^{m\times m},$
$t=1, \ldots, k$
such that 
$$
b(x)^2 \bF(x) 
= \sum_{t=1}^k  \bA_t(x)^\mathrm{T} \bA_t(x) + x_1(1-x_1) \sum_{t=1}^k \bB_t(x)^\mathrm{T} \bB_t(x) .
$$
\end{prop}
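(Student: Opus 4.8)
The plan is to follow exactly the two-stage template of the preceding propositions: reduce the general matrix $\bF$ to a diagonal one via Schm\"udgen's decomposition, and then quote the diagonal strip representation already established in Proposition~\ref{prop_DiagStr}. For the ``only if'' part, suppose $\bF \succeq 0$ on $[0,1]\times\R$. Proposition~\ref{prop:Schmud} supplies a nonzero polynomial $b$, matrices $\bX_\pm \in \R[x_1,x_2]^{m\times m}$, and a diagonal matrix $\bD$ with
$$
\bX_+\bX_- = \bX_-\bX_+ = b I_m, \quad b^2\bF = \bX_+ \bD \bX_+^\mathrm{T}, \quad \bD = \bX_- \bF \bX_-^\mathrm{T}.
$$
Note that the formula in the statement should read with the matrices $\bU_t,\bV_t$ introduced in the hypothesis (the $\bA_t,\bB_t$ there are the diagonal building blocks), and the proof will produce exactly these.

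First I would verify that $\bD \succeq 0$ on the strip. Fixing a point $x = (x_1,x_2) \in [0,1]\times\R$ and $u \in \R^m$, the relation $\bD = \bX_- \bF \bX_-^\mathrm{T}$ gives $u^\mathrm{T}\bD u = (\bX_-^\mathrm{T} u)^\mathrm{T}\bF(\bX_-^\mathrm{T} u) \geq 0$, so each diagonal entry of $\bD$ is nonnegative on the strip. Proposition~\ref{prop_DiagStr} then yields diagonal matrices $\bA_t,\bB_t$, $t=1,\dots,k$, with $\bD = \sum_{t=1}^k \bA_t^\mathrm{T}\bA_t + x_1(1-x_1)\sum_{t=1}^k \bB_t^\mathrm{T}\bB_t$. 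Substituting this into $b^2\bF = \bX_+\bD\bX_+^\mathrm{T}$ and distributing $\bX_+(\,\cdot\,)\bX_+^\mathrm{T}$ across the sum (the scalar $x_1(1-x_1)$ commutes past the matrix products) gives
$$
b^2\bF = \sum_{t=1}^k (\bA_t\bX_+^\mathrm{T})^\mathrm{T}(\bA_t\bX_+^\mathrm{T}) + x_1(1-x_1)\sum_{t=1}^k (\bB_t\bX_+^\mathrm{T})^\mathrm{T}(\bB_t\bX_+^\mathrm{T}),
$$
so setting $\bU_t = \bA_t\bX_+^\mathrm{T}$ and $\bV_t = \bB_t\bX_+^\mathrm{T}$ produces the claimed representation.

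For the ``if'' part I would argue pointwise. Given the representation, fix $x\in[0,1]\times\R$ and $y\in\R^m$; since $x_1(1-x_1)\geq 0$ on $[0,1]$, one has $b(x)^2\,(y^\mathrm{T}\bF(x)y) = \sum_t \|\bU_t(x)y\|_2^2 + x_1(1-x_1)\sum_t\|\bV_t(x)y\|_2^2 \geq 0$. The one point genuinely requiring care — and the only real obstacle — is that this gives $y^\mathrm{T}\bF(x)y\geq 0$ only after dividing by $b(x)^2$, which is impossible on the zero set of $b$. The fix will be the standard density argument: since $b\neq 0$ is a polynomial, its zero set is a proper algebraic subset of the strip, so its complement is dense, and since $x\mapsto y^\mathrm{T}\bF(x)y$ is continuous and nonnegative on this dense set it must be nonnegative throughout $[0,1]\times\R$, whence $\bF\succeq 0$ there. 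All remaining manipulations are the routine matrix bookkeeping already performed in the analogous Propositions~\ref{prop_sosrmR}--\ref{prop_GenAB}.
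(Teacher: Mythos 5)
Your proposal is correct and follows essentially the same route as the paper, whose proof of this proposition is literally the one-line remark that it is a consequence of Propositions \ref{prop:Schmud} and \ref{prop_DiagStr}; you simply fill in the details (psd-ness of $\bD$ on the strip, conjugation by $\bX_+$, the pointwise converse), and your density argument for dividing by $b(x)^2$ is a small but welcome refinement the paper glosses over. You are also right that the statement's $\bA_t,\bB_t$ should read $\bU_t,\bV_t$ as introduced in the hypothesis.
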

\begin{proof}
This proposition is a consequence of Propositions \ref{prop:Schmud} and \ref{prop_DiagStr}.
\end{proof}

\subsection{Numerical tests}

Given a polynomial matrix $\bF \in \Sy^m \R[x].$
The idea to find sos-based/sosrf-based representations of $\bF$ is
\begin{itemize}
\item
	firstly apply Algorithm 1 to $\bF$ to obtain a diagonal polynomial matrix $\bD = \diag(d_1, \ldots, d_m)$ satisfying \eqref{eq:SchmudDec};
	
\item	
	next, find sosrf-representations of $d_1, \ldots
	, d_m,$
	with assumption that the denominators in sosrf-representations of $d_i$'s are all exponents of sums of coordinates functions,
	by applying the algorithm in \cite{LeVaBa15}.
\end{itemize}

In the second stage of our computation,
we will express polynomial matrices as matrix polynomials.
The implementation 
needs 
 to take advantage of a monomial order for them.
In the rest of this paper,
we prefer 
the \textit{lexicographic order} of monomials earlier mentioned.
We suppose that a $n$-variable matrix polynomial  $\bP(x)$ of degree $d,$
is always expressed
under
lexicographic order:
$$
\bP(x) = \sum_{\alpha\in \Omega} P_\alpha x^\alpha, 
			\enskip P_\alpha \in \Sy^m\R,
			\quad \forall x\in \R^n.
$$ 
With
$\bP(x)$ written 
as above,
let furthermore
 $\mathcal{P}$ be the corresponding column vector of matrix coefficients $P_\alpha,$
and
$\Pi(x):=\Pi_{n,d}(x)$
be the column vector 
of matrices 
$x^\alpha I_m$ 
with respect to ``$\leqq_{lex}$''.
Then $\bP(x)$ can be written as
(see also in, eg., \cite{p039, d617})
$$
\bP(x) = \Pi(x)^\mathrm{T} \mP = \mP^\mathrm{T} \Pi(x).
$$
For example, 
\begin{enumerate}[i)]
\item
	$m=n=1,$ one has $\Omega(1,d) = \{0,1,\cdots, d\}$ and
	$$
	a(x) = p_0 + p_1 x + \ldots + p_d x^d 
	=
	[p_0 \quad p_1 \ldots \quad p_d] [1 \quad x \quad  \ldots \quad x^d]^\mathrm{T}
	= \mathbf{p}^\mathrm{T} \Pi_{1,d}(x)  
	$$

\item
	$m=n=d=2,$ $\Omega =\{(0,0), (0,1), (0,2), (1,0), (1,1), (2,0)\}$ and
	\begin{align*}
	\Pi_{2,2}(x_1, x_2) &=
	         [I_2  \quad x_2 I_2 \quad x_2^2 I_2 \quad x_1 I_2\quad x_1x_2 I_2 \quad x_1^2 I_2]^\mathrm{T}	,\\
	 \mP &= [P_{00} \quad P_{01} \quad P_{02} \quad P_{10} \quad P_{11} \quad P_{20}]^\mathrm{T}, 
	 		\quad  P_{ij} \in \Sy^2\R.         
	\end{align*}
One can check  that 
	$$
	\bP(x) = \mP^\mathrm{T} \Pi_{2,2}(x) 
	=
	P_{00} + P_{01}x_2 + P_{02} x_2^2 + P_{10}x_1+ P_{11} x_1 x_2 + P_{20} x_1^2	.
	$$	
\end{enumerate}

\begin{exam}\label{exam:3}
\rm 
Reconsider the matrix in Example \ref{exam:2}:
$
\bF(x,y) 
= 
\begin{bmatrix}
1 + x^4y^2  & xy \\
xy          & 1 + x^2y^4
\end{bmatrix}.
$
It can be verified that $\bF\succeq 0$ on $\R^2$ since
$\bF_{11} = 1 +x^4y^2 > 0,$
$\bF_{22} = 1 +x^2y^4 > 0$
and
\begin{align*}
\det \bF 
= (1+x^4y^2)(1+x^2y^4) -x^2y^2
&= 1 + x^4y^2+x^2y^4 +x^6y^6 - x^2y^2\\
(\mbox{AM-GM inequality})\qquad
& \geq 3\sqrt[3]{x^6y^6} +x^6y^6 -x^2y^2\\
&= 2x^2y^2 + x^6y^6 \\
& \geq 0, \quad \forall (x,y) \in \R^2.
\end{align*}
Relying on the algorithm proposed in \cite{LeVaBa15},
we wish to obtain
two diagonal polynomials of $\bD$ from Example \ref{exam:2} 
are sums of squares of rational functions.
It is clear that the 1st-one is a sum of squares.
The 2nd-one is 
$(1+x^4y^2)\det \bF.$
We see that both 
$\det \bF$ and 
$(1+x^4y^2)\det \bF$
consist of the term $-x^2y^2$
and they do not contain the terms
$$
x^4y^4,\
x^2,\
x^4,\
y^4,\
y^2.
$$
But since
$$
x^2y^2 
= 1. x^2y^2 
= x. xy^2 
= x^2. y^2
= x^2y. y
= xy. xy,
$$
both
$\det \bF$ and 
$(1+x^4y^2)\det \bF$
are not sums of squares of polynomial matrices.

Applying the algorithm proposed in \cite{LeVaBa15} we obtain an approximation\footnote{
In fact we can directly compute that
$$
(1+x^2+y^2 ) \det \bF(x,y)
=\frac{7}{8} + y^2 +x^2 +(\sqrt{2} x^2y^2 -1)^2 + x^2y^6 +x^6(y^2+y^6+y^8) + x^8y^6.
$$
}
of  $(1+x^2+y^2 ) \det \bF(x,y)$
as
a sum of 4 squares.
A decomposition of $\bF$ as in Proposition \ref{prop_sosrmR} is thus derived.
It is worth mentioning that the algorithm in \cite{LeVaBa15} is implemented in \textsc{Matlab},
we have converted  these \textsc{Matlab} codes into \textsf{OCTAVE} for our present situation.
\end{exam} 

\section{Conclusion}
We have clearly expressed the Schm\"{u}dgen's diagonalization for polynomial matrices and have then implemented in \textsf{OCTAVE}.
We have then proposed an algorithm for representing a global positive semidefinite polynomial matrix as a sum of Hermitian squares of polynomial matrices.
Our method combines the Schm\"{u}dgen's diagonalization 
and 
an algorithm that approximates the diagonal elements of the resulting diagonal matrix as sums of squares, 
due to Reznick's idea.
This have been applied to some psd polynomial matrices on several sets such as 
$\R$ and its intervals,
and strips $[a,b]\times \R.$
Corresponding numerical illustrations with \textsf{OCTAVE} have been presented in the paper.
This might lead to other numerical diagonalizations of other psd polynomial matrices in the future.

\subsubsection*{Acknowledgement}  
The first author would like to thank Vietnam Institute for Advanced Study in
Mathematics (VIASM) for warm hospitality
during his visit.


\end{document}